\tikzset{shorten <>/.style={shorten >=#1,shorten <=#1}}
\definecolor{darkgreen}{rgb}{0,0.30,0} 
\definecolor{darkred}{rgb}{0.75,0,0}
\definecolor{darkblue}{rgb}{0,0,0.6} 
\renewcommand*{\backref}[1]{}
\renewcommand*{\backrefalt}[4]{({%
    \ifcase #1 Not cited.%
          \or On p.~#2%
          \else On pp.~#2%
    \fi%
    })}
\def\makeautorefname#1#2{\expandafter\def\csname#1autorefname\endcsname{#2}}
\newtheorem{thm}{Theorem}[section]
\newtheorem{prop}{Proposition}[section]
\newtheorem{lem}{Lemma}[section]
\theoremstyle{definition}
\newtheorem{rem}{Remark}[section]
\newtheorem{introthm}{Theorem}
\let\c@cor=\c@thm
\let\c@prop=\c@thm
\let\c@lem=\c@thm
\let\c@conj=\c@thm
\let\c@defn=\c@thm
\let\c@df=\c@thm
\let\c@exmp=\c@thm
\let\c@rem=\c@thm
\let\c@sch=\c@thm
\let\c@equation\c@thm
\newcommand{\Map}{\textup{Map}}
\newcommand{\id}{\textup{id}}
\newcommand{\hocolim}{\textup{hocolim}\,}
\newcommand{\holim}{\textup{holim}\,}
\newcommand{\ra}{\longrightarrow}
\newcommand{\simar}{\overset\sim\longrightarrow}
\newcommand{\mc}{\mathcal}
\newcommand{\op}{\textup{op}}
\newcommand{\sC}{\mathcal{C}}
\newcommand{\bA}{\textbf{A}}
\newcommand{\ob}{\textup{ob}\, }
\newcommand{\bG}{\mathcal{B}G_{\sbt}}
\newcommand{\cG}{C(\mathcal{B}G_{\sbt})}
\newcommand{\tG}{\mathcal{E}G}
\newcommand{\po}{\ar@{}[dr]|{\text{\pigpenfont R}}}
\newcommand{\pb}{\ar@{}[dr]|{\text{\pigpenfont J}}}
\DeclareMathOperator{\Cat}{\mathrm{Cat}}
\DeclareMathOperator{\Sing}{\mathrm{Sing}}
\DeclareMathOperator{\Tw}{\mathrm{Tw}}
\newcommand{{\sbt}}{\,\begin{picture}(-1,1)(0.5,-1)\circle*{1.8}\end{picture}\hspace{.05cm}}
\newlength{\storeparskip}
\title{Coassembly is a homotopy limit  map}
\author{Cary Malkiewich}
\address{Department of Mathematics, Binghamton University}
\email{malkiewich@math.binghamton.edu}
\author{Mona Merling}
\address{Department of Mathematics, The University of Pennsylvania}
\email{mmerling@math.upenn.edu}
\begin{document}

\begin{abstract}
We prove a claim by Williams that the coassembly map is a homotopy limit map. As an application, we show that the homotopy limit map for the coarse version of equivariant $A$-theory agrees with the coassembly map for bivariant $A$-theory that appears in the statement of the topological Riemann-Roch theorem.
\end{abstract}

\dedicatory{In memory of Bruce Williams}
\maketitle


\begingroup%
\setlength{\parskip}{\storeparskip}
\tableofcontents
\endgroup%

\section{Introduction}

In the celebrated paper \cite{dww}, Dwyer, Weiss and Williams give index-theoretic conditions that are necessary and sufficient for a perfect fibration $E \to B$ to be equivalent to a fiber bundle with fibers compact topological (resp. smooth) manifolds.  In \cite{bruce}, Williams defines a bivariant version of $A$-theory for fibrations, which is contravariant in one variable and therefore comes with a coassembly map. He then reinterprets the condition from \cite{dww} as the condition that a certain class in bivariant $A$-theory (the Euler characteristic), after applying the coassembly map, lifts either along the assembly map or the inclusion of stable homotopy into $A(X)$.

In this paper, we show that coassembly maps in general agree with homotopy limit maps, the latter being more amenable to computations. In particular, this shows that the target of Williams's coassembly can be interpreted  as a homotopy fixed point spectrum, which has an associated homotopy fixed point spectral sequence that computes its homotopy groups. Together with well-known formulas for the assembly map, e.g. in \cite[6.2]{coassembly}, this means we get combinatorial formulas for each of the maps used in the statement of the bivariant topological and smooth Riemann-Roch theorems from \cite{bruce}.  

In general, the homotopy limit map is defined for any topological group $G$ and $G$-space or $G$-spectrum $X$ as the map from fixed points to homotopy fixed points,
\begin{equation*} X^G\to X^{hG}. \end{equation*}
Atiyah proved that for $KU$ with $C_2$-action induced by complex conjugation the homotopy limit map is an equivalence. In general, this is not the case, and the homotopy limit problem, beautifully described in \cite{homotopylimit}, asks how close the homotopy limit map is to being an equivalence. Some of the classical examples of interest are Segal's conjecture where $X=\mathbb{S}_G$, the sphere spectrum for $G$ finite, the Atiyah-Segal completion theorem, where $X=KU_G$, equivariant topological $K$-theory for $G$ compact Lie, and Thomason's theorem, where $X=KE$,   the algebraic $K$-theory of a finite Galois extension with Galois group action. In all of these cases, the homotopy limit map is shown to become an equivalence after suitable completion or inversion of an element in the homotopy groups of the fixed point spectrum. More recent solutions of homotopy limit problems appear in \cite{hukrizormsby}, \cite{rondingsetc}, \cite{drewhomotopylimit}, which study the homotopy limit problem for $KGL$, the motivic spectrum representing  algebraic $K$-theory, with $C_2$-action.


On the other hand, the coassembly map considered in \cite{bruce} is defined for any reduced contravariant homotopy functor $F$, whose domain is the category of spaces over $BG$. It is a natural transformation $F \to F_{\%}$, one that universally approximates $F$ by a functor that sends homotopy pushouts to homotopy pullbacks. It is formally dual to the assembly map of \cite{weisswilliamsassembly,DavisLuck}, which by \cite{HP, DavisLuck} coincides with the assembly map of the Farrell-Jones conjecture \cite{farrell_jones}. A comprehensive recent survey on assembly maps is given in \cite{luckassembly}. The coassembly map is also a close analog of the linear approximation map of embedding calculus \cite{goodwillie_weiss1, goodwillie_weiss2}. Further applications of the coassembly map appear in \cite{cohen_umkehr,raptis_steimle,coassembly}.
%


Our first result is a precise correspondence between these two constructions. {We only consider topological groups $G$ that are the realization of a simplicial group $G_{\sbt}$, and we focus on the case where $F$ takes values in spectra, because the corresponding result for spaces is similar and a little easier. Without loss of generality, we assume that the homotopy functor $F$ is enriched in simplicial sets, so that $F(EG)$ carries a continuous left action by $G$, and $F(BG)$ maps to its fixed points. We may then make $F(EG)$ into a $G$-spectrum whose fixed points are $F(BG)$. An analogue of this result for the assembly map can be found in \cite[\S 5.2]{DavisLuck}.
\begin{introthm}\label{intro_first}(\autoref{coassemblymap})
	Let $G$ be a group that is the realization of a simplicial group $G_{\sbt}$. The coassembly map on the terminal object $F(BG) \to F_{\%}(BG)$ is equivalent to the homotopy limit map of this $G$-spectrum, $F(BG) \to F(EG)^{hG}$.
\end{introthm}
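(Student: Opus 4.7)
The plan is to exhibit both $F_{\%}(BG)$ and $F(EG)^{hG}$ as the totalization of a common cosimplicial spectrum arising from the simplicial bar construction of $G$, and then to check that the augmentations out of $F(BG)$ on the two sides agree.

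The first step is to use the standard models $BG = |BG_{\sbt}|$ with $BG_n = G^n$, and $EG = |EG_{\sbt}|$ with $EG_n = G^{n+1}$, where the free $G$-action on the last coordinate realizes $EG \to BG$ as a principal $G$-bundle. Since $F_{\%}$ is, by universal property, the reduced contravariant homotopy functor to spectra that sends homotopy pushouts to homotopy pullbacks, applying it to the presentation of $BG$ as a homotopy colimit of the bar yields
\begin{equation*}
F_{\%}(BG) \;\simeq\; \holim_{[n] \in \Delta}\, F_{\%}(BG_n \to BG).
\end{equation*}
Each $BG_n$ is a disjoint union of $G^n$ contractible pieces, and since $F_{\%}$ is excisive (so sends coproducts to products) and agrees with $F$ on contractible objects of $\textup{Spaces}/BG$ (where excisivity is vacuous), the $n$-th level identifies with $\prod_{\sigma \in G^n} F(*_\sigma \to BG)$.

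The second step is to identify each $F(*_\sigma \to BG)$ with $F(EG)$ coherently. The fibrant replacement of $*_\sigma \to BG$ in $\textup{Spaces}/BG$ is a principal $G$-bundle over $BG$ with contractible total space, hence equivalent to $EG \to BG$; the equivalence is noncanonical, and its $G$-indeterminacy is precisely what is tracked by the face and degeneracy maps of the bar construction. The resulting cosimplicial spectrum $[n] \mapsto \prod_{G^n} F(EG)$ therefore carries exactly the structure maps of the standard cobar resolution of $F(EG)$ as a $G$-spectrum, so its homotopy limit over $\Delta$ computes $F(EG)^{hG}$. The map $F(BG) \to F_{\%}(BG)$ from coassembly and the map $F(BG) = F(EG)^G \to F(EG)^{hG}$ from the homotopy limit both arise as the canonical augmentation of this cosimplicial spectrum by its fixed points at level $-1$, and so agree under the identification above.

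The hard part is guaranteeing coherence of the identifications $F(*_\sigma \to BG) \simeq F(EG)$ across all simplicial operators; the cleanest way to handle this is to fix a point-set model of the coassembly map via the comonad $p_!\, p^{*}$ associated to the principal $G$-bundle $p \colon EG \to BG$, under which $p_!\, p^{*}\, BG \simeq EG$ and the iterates satisfy $(p_!\, p^{*})^{n}\, BG \simeq EG \times G^{n-1}$ over $BG$. This makes the cosimplicial structure manifestly the cobar of $F(EG)$, after which the comparison with $F(EG)^{hG}$ and the compatibility with the augmentation from $F(BG)$ are formal.
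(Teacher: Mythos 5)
Your approach---realizing both $F_{\%}(BG)$ and $F(EG)^{hG}$ as totalizations of a common cosimplicial spectrum built from the simplicial bar resolution of $BG$---is genuinely different from the paper's, which instead exhibits both maps as units of adjunctions (restriction versus homotopy right Kan extension: once relating reduced homotopy functors on $\mathcal U_{BG}$ to homotopy functors on $\Delta_{BG}$, once relating $\cG$-diagrams to spectra with $G$-action) and then constructs an equivalence of adjunctions by passing through parametrized spectra over $|\Delta_{BG}^\op|$; the technical heart of that comparison is \autoref{contractible}, the contractibility of $\underset{\Delta_{BG}^\op}\hocolim \Map_{BG}(\Delta^p,EG)$.

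There is, however, a real gap in your argument as written. The identification of $F_\%(BG_n \to BG)$ with $\prod_{\sigma\in G^n}F(*_\sigma \to BG)$ treats $G^n$ as a disjoint union of points and is therefore only valid when $G$ is discrete. The theorem is stated for $G = |G_{\sbt}|$ for an arbitrary simplicial group, and the main intended example is $G \simeq \Omega X$, which is far from discrete. For non-discrete $G$, the map $G^n \to BG$ is not a coproduct of points, so the ``coproducts to products'' clause of excisivity does not apply; what one actually gets is that $F_\%(G^n \to BG)$ is a section spectrum (equivalently a continuous mapping spectrum, a cotensor of $F(EG)$ by the space $G^n$) rather than an indexed product, and showing that this agrees with $\Map(G^n, F(EG))$ \emph{compatibly with the cosimplicial structure} is essentially the content of the parametrized spectrum machinery that the paper deploys. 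Your second concern---the coherence of the identifications $F(*_\sigma) \simeq F(EG)$, which is precisely what must produce the twisted $d^0$ of the cobar (a naive pullback along the bar face maps gives no twist)---you correctly flag as the hard part, and your proposal to use the comonad $p_!p^*$ is a sound repair: the \v{C}ech nerve of $p\colon EG\to BG$ has $n$-th level $EG^{\times_{BG}(n+1)} \cong EG\times G^n$, whose simplicial structure carries the twist on the nose, and this reformulation also sidesteps the point-by-point decomposition. But you have not carried this out, and as the argument stands both the non-discrete case and the coherence of the cosimplicial identification remain open.
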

This is similar to a claim in \cite{bruce}, when $F$ is a contravariant form of algebraic $K$-theory and $G \simeq \Omega X$. Giving a precise proof amounts to showing that diagrams on a suitable category of contractible spaces over $BG$ correspond to $G$-objects, plus a little more structure. Our version of the argument uses parametrized spectra to form a bridge between the two settings.


Our second result applies \autoref{intro_first} to Williams's bivariant $A$-theory functor $\bA(E \to B)$ to fibrations of the form $EG\times_G X\to BG$ where $G$ is a finite group. This gives the homotopy limit map of the ``coarse'' equivariant  $A$-theory $G$-spectrum from \cite{CaryMona}, equivalently the $K$-theory of group actions from \cite{Gmonster2} applied to retractive spaces over $X$. 


\begin{introthm}\label{intro_second}(\autoref{thm:homotopy_fixed_equals_coassembly})
In the stable homotopy category, the homotopy limit map for $\bA_G^\textup{coarse}(X)$ is isomorphic to the coassembly map for bivariant $A$-theory:
\[ \xymatrix{
\bA_G^{\textup{coarse}}(X)^H \ar[d]^-\sim_-\Phi \ar[r] & \bA_G^{\textup{coarse}}(X)^{hH} \ar[d]^-\sim \\
\bA(EG \times_H X \ra BH) \ar[r] & \bA_{\%}(EG \times_H X \ra BH).} \]
\end{introthm}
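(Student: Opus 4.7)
The plan is to apply \autoref{intro_first} with the finite group $H$ playing the role of $G$, to the contravariant homotopy functor on spaces $Y \ra BH$ defined by
\[ F(Y \ra BH) := \bA\!\bigl( Y \times_{BH} (EG \times_H X) \ra Y \bigr). \]
This $F$ is reduced and homotopy invariant, and contravariant via homotopy pullback of the base, because bivariant $A$-theory has these properties. At the terminal object the pullback is trivial, so $F(BH) = \bA(EG \times_H X \ra BH)$, the bottom-left corner of the target square, and the coassembly map at $BH$ is precisely the bottom edge $\bA(EG \times_H X \ra BH) \ra \bA_\%(EG \times_H X \ra BH)$.

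\autoref{intro_first} then produces a commutative square
\[ \xymatrix{ F(BH) \ar[r] \ar[d]^-\sim & F_\%(BH) \ar[d]^-\sim \\ F(EH)^H \ar[r] & F(EH)^{hH}, } \]
whose bottom row is the homotopy limit map for the $H$-spectrum $F(EH)$. The left vertical of the theorem is provided by the fixed-point description of coarse equivariant $A$-theory from \cite{CaryMona}: $\bA_G^{\textup{coarse}}(X)^H \simeq \bA(EG \times_H X \ra BH) = F(BH)$. Hence it suffices to identify the right column, which reduces to producing an $H$-equivariant equivalence between $F(EH)$ and $\bA_G^{\textup{coarse}}(X)$, then passing to homotopy fixed points.

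The main obstacle is precisely this $H$-equivariant identification. The pullback $EH \times_{BH} (EG \times_H X)$ is $H$-equivariantly equivalent to $EH \times X$ with the diagonal $H$-action, because associated bundles become trivial when pulled back along the total space of the universal bundle. Applying bivariant $A$-theory to this trivial family should reproduce an $H$-equivariant form of $A(X)$, which by \cite{CaryMona} (equivalently, \cite{Gmonster2}) is the underlying $H$-spectrum of $\bA_G^{\textup{coarse}}(X)$. The delicate point is matching the two $H$-actions: the action on $F(EH)$ arises from $H$ acting on $EH = |EH_\sbt|$ through the parametrized-spectra bridge used in the proof of \autoref{intro_first}, while the action on $\bA_G^{\textup{coarse}}(X)$ is built into its construction as a $G$-spectrum. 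I expect that modeling both sides within the same parametrized-spectra framework from \autoref{intro_first} makes the two actions manifestly agree, and then $F(EH)^{hH} \simeq \bA_G^{\textup{coarse}}(X)^{hH}$ supplies the right vertical equivalence, completing the identification in the stable homotopy category.
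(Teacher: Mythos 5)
Your overall framing is correct and matches the paper's: apply Theorem~\ref{intro_first} (with $H$ in the role of $G$) to the contravariant functor on $\mc U_{BH}$ given by bivariant $A$-theory of pullbacks of $EG\times_H X\to BH$, identify the coassembly map with the homotopy limit map for the resulting $\cG$-diagram, and then compare that diagram to the one defined by $\bA_G^{\textup{coarse}}(X)$. The left vertical being $\Phi$ from \autoref{coarse_equals_bivariant} is also right.

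However, there is a genuine gap at exactly the step you flag as ``the delicate point'' and then defer. Two issues. First, your reduction is slightly too weak: it is not enough to produce an $H$-equivariant equivalence $F(EH)\simeq \bA_G^{\textup{coarse}}(X)$ and separately invoke the fixed-point equivalence $\Phi$; you need a single map of $C(\mathcal{B}H)$-diagrams (i.e., an equivalence compatible with both the $H$-action on the underlying spectrum \emph{and} the inclusion of fixed points into it) that restricts to $\Phi$ at $[H]$. Without that, the commutativity of the square and the compatibility of the two verticals are unproven. Second, the assertion that ``modeling both sides within the same parametrized-spectra framework makes the two actions manifestly agree'' is a hope, not an argument, and it conceals the actual difficulty: the naive diagram relating $\Cat(\tG,R_{hf}(X))^G$, $\Cat(\tG,R_{hf}(X))$, and the two bivariant categories does \emph{not} commute strictly, only up to a canonical natural isomorphism. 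The paper's proof handles this by exhibiting an explicit pseudoequivariant functor, checking the cocycle condition $g\theta_h\circ\theta_g=\theta_{gh}$, strictifying with $\Cat(\tG,-)$, and then using the auxiliary category $\mc D_I$ to turn a square that commutes up to a $G$-fixed natural isomorphism into a strictly commuting zig-zag of $C(\mathcal{B}G)$-diagrams of Waldhausen categories. None of this coherence work is present in your proposal, and it is precisely where the content of the theorem lies.
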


This is not quite a direct consequence of \autoref{intro_first} because we have to show that the equivalence between the two theories preserves the $G$-actions and inclusions of fixed points, up to some coherent homotopies.

\begin{rem}
This provides one half of an argument that would significantly generalize the main theorem of \cite{coassembly}. The other half relies on a conjectural connection between assembly maps and the Adams isomorphism, which we do not pursue here.
\end{rem}

\begin{rem}
This paper does not consider the homotopy limit problems for profinite groups, which involve a modified definition of homotopy fixed points that are associated to the continuous cohomology of the profinite group, see \cite{devinatzhopkins}. Our homotopy limit map is the usual one from e.g. \cite[Chapter XI, 3.5.]{bousfield_kan}, and we only consider those topological groups that are geometric realizations of simplicial groups. The main example we have in mind is $\Omega X$. 
\end{rem}

\textbf{Conventions.} Throughout all of our topological spaces are compactly generated weak Hausdorff (CGWH), see \cite[Appendix A]{lewis_thesis} and \cite{strickland}. Unless otherwise noted, the term ``spectra'' can be interpreted to mean prespectra, symmetric spectra, or orthogonal spectra. See \cite{mmss} for more information about how to pass between these different models. The term ``na\"ive $G$-spectrum'' refers to a spectrum with an action by the group $G$, up to maps that are equivalences on all of the categorical fixed point spectra $X^H$ subgroups $H \leq G$. Equivalently, this can be viewed as a diagram of spectra on the orbit category $\mc O(G)^{op}$. In fact, we will only be concerned with diagrams restricted to the trivial orbit $G/G$ and the full orbit  $G/e$, corresponding to the data of the $G$-fixed points of a na\"ive $G$-spectrum and its underlying spectrum with $G$-action.

\subsection*{Acknowledgements}

We thank Mike Hill and the anonymous referee of \cite{CaryMona} for helpful feedback that contributed to this project. We are greatly indebted to the anonymous referee of this paper for a very careful reading that substantially improved the paper, and for suggesting the proof of \autoref{contractible}, which is much simpler and more elegant than the one that appeared in the first version. We would also like to thank George Raptis for all the insights he has shared with us about bivariant $A$-theory and the Dwyer-Weiss-Williams theorem during the Junior Trimester Program at the Hausdorff Institute in Bonn when we were all part of the ``New directions in $A$-theory" group, and we thank Jim Davis for illuminating discussions about his work with Wolfgang L{\"u}ck on the dual case of the assembly map. Lastly, we very much thank the Max Planck Institute in Bonn for their hospitality while much of this paper was written. The second named author also acknowledges support from NSF grant DMS 1709461/1850644.

\section{Review of coassembly}

Let $B$ be an unbased space and let $\mc U_B$ denote the comma category of spaces over $B$. A commuting square in $\mc U_B$ is a homotopy pushout square if it is such when we forget the maps to $B$. A contravariant functor $F$ from $\mc U_B$ to spectra is
\begin{itemize}
	\item \textbf{reduced} if it sends $\emptyset \to B$ to a weakly contractible spectrum,
	\item a \textbf{homotopy functor} if it sends weak equivalences of spaces to stable equivalences of spectra, and
	\item \textbf{excisive} if it is a reduced homotopy functor that sends coproducts and homotopy pushout squares of spaces to products and homotopy pullback squares of spectra, respectively.
\end{itemize} 

Note that this last condition can be stated in several equivalent ways, the simplest of which is that $F$ takes all homotopy colimits to homotopy limits. 

If $F$ is a contravariant reduced homotopy functor from $\mc U_B$ to spectra, consider the comma category of excisive functors $P$ with natural transformations $F \to P$. Define a weak equivalence of such functors to be a natural transformation $P \to P'$ (under $F$) that is a stable equivalence at every object. Inverting these equivalences gives the homotopy category of excisive functors under $F$.

\begin{prop}(see \cite[5.4]{cohen_umkehr},\cite[5.4]{coassembly},\cite[\S 7]{malkiewich2015tower})
	The homotopy category of excisive functors under $F$ has an initial object $F_{\%}$, in other words a universal approximation of $F$ by an excisive functor. The natural transformation $F \to F_{\%}$ can be given by the formula
	\[ F(X \to B) \to \underset{(\Delta^n \to X) \in \Delta_{X}^\op}\holim\, F((\Delta^n \amalg B) \to B). \]
\end{prop}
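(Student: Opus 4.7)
The plan is to verify, in order, the three defining properties: (i) the displayed formula gives a well-defined natural transformation $F \to F_{\%}$, (ii) $F_{\%}$ is excisive, and (iii) this natural transformation is initial among natural transformations from $F$ to excisive functors. Initiality then characterizes $F_{\%}$ uniquely up to equivalence in the homotopy category of excisive functors under $F$.

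For (i), I would construct the natural transformation as follows. For each $(X \to B) \in \mc U_B$ and each simplex $\sigma: \Delta^n \to X$ in $\Delta_X$, there is a morphism in $\mc U_B$ between $(\Delta^n \amalg B \to B)$ and $(X \to B)$ determined by $\sigma$ on the $\Delta^n$-component and the identity on the $B$-component. Applying the contravariant $F$ yields a cone from $F(X \to B)$ over the diagram $\Delta_X^{\op} \to \mathrm{Spectra}$, which determines the required map into the homotopy limit, visibly natural in $X$.

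For (ii), the crucial input is the classical equivalence $\hocolim_{\sigma \in \Delta_X} \Delta^n \simeq X$. For a homotopy pushout $X \simeq X_1 \cup^h_{X_0} X_2$, one uses a Thomason-type decomposition of $\Delta_X$ built from the subcategories of simplices factoring through $X_1$, through $X_2$, and through $X_0$, combined with a Fubini interchange of homotopy limits, to convert the pushout of spaces into a homotopy pullback of spectra. Reducedness of $F_{\%}$ is automatic since $\Delta_\emptyset$ is empty and the empty homotopy limit is contractible, and preservation of coproducts follows from an analogous, simpler decomposition.

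For (iii), given any excisive functor $P$ with a natural transformation $F \to P$, excisiveness of $P$ combined with the simplex decomposition of $X$ gives $P(X \to B) \simeq \holim_{\Delta_X^{\op}} P((\Delta^n \amalg B) \to B)$. The natural transformation $F \to P$ restricts on each simplex, so taking homotopy limits produces the required factorization $F_{\%} \to P$, with uniqueness in the homotopy category following from the universal property of the homotopy limit. The main obstacle is step (ii), specifically showing that the simplex-category construction interchanges with homotopy pushouts in $\mc U_B$ precisely enough for the Fubini interchange to apply; once this is set up, (i) and (iii) become essentially formal consequences of the universal property of homotopy limits, which is consistent with the fact that close variants of this construction have appeared in the literature cited in the statement.
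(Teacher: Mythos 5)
The paper does not prove this proposition internally; the citations delegate the argument, and the authors' own perspective surfaces in the subsequent proof of \autoref{adjunction2}, where the coassembly map is characterized as the unit of the adjunction between restriction to $\Delta_{BG}$ and homotopy right Kan extension. In that framework initiality among excisive functors under $F$ comes for free from the adjunction, and the real content is showing that the right Kan extension of the restriction is reduced, a homotopy functor, and excisive. Your proposal instead directly verifies the universal property, which is a legitimate alternative route, but there is a gap in step (i) as written. A morphism in $\mc U_B$ is a map of spaces commuting with the reference maps to $B$, so a morphism from $(\Delta^n \amalg B \to B)$ to $(X \to B)$ must send the $B$-summand \emph{into} $X$, not to $B$; the ``identity on the $B$-component'' is therefore unavailable unless $X \to B$ carries a chosen section. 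Consequently the cone from $F(X \to B)$ over the $\Delta_X^{\op}$-diagram does not simply fall out of raw contravariant functoriality. One needs either to work through the homotopy right Kan extension (where the unit is tautological), or to use the parametrized sections description mentioned immediately after the proposition. Note that replacing the target by $F(\Delta^n \to B)$ would repair the cone but would no longer match the stated formula, and identifying $F(\Delta^n \to B)$ with $F((\Delta^n \amalg B) \to B)$ is not automatic for a reduced homotopy functor that is not yet excisive, so this substitution needs its own justification.

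Steps (ii) and (iii) are plausible but remain sketchy. A Thomason-type decomposition of $\Delta_X$ together with Fubini for homotopy limits is indeed the standard route to excisiveness, and step (iii) essentially recapitulates the universal property of the homotopy right Kan extension; as written the proposal reproves the easy half of that property while deferring the hard half (the excisiveness) to a decomposition that would need real work to set up. If you repair step (i) and flesh out the decomposition argument in step (ii), the overall strategy can be made to work, and it would give a direct verification in place of the adjunction-theoretic argument that the paper implicitly relies on.
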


Here $\Delta_{X} = \Delta_{\Sing X}$ is the category of simplices in the simplicial set $\Sing X$.
Concretely, it has an object for every continuous map $\Delta^n \to X$ and a morphism for every factorization $\Delta^p \to \Delta^q \to X$ where $\Delta^p \to \Delta^q$ is a composite of inclusions of a face. There is a natural ``last vertex'' operation that gives an equivalence $|\Delta_X| \simar X$ \cite[III.4]{goerss_jardine},\cite[\S 5]{coassembly}.

We could alternatively describe $F_{\%}(X \to B)$ as the spectrum of sections of a parametrized spectrum over $X$ whose fiber over $x$ is $F((x \amalg B) \to B)$. See \cite{weiss_williams_assembly}, \cite{bruce}, \cite{cohen_umkehr}, \cite{malkiewich2015tower}, and \cite{coassembly} for more details and other explicit constructions of the coassembly map.

\section{Proof of  \autoref{intro_first}}\label{coassemblysection}

The first step is to interpret both the homotopy limit map and the coassembly map as the unit of an adjunction.

Let $G_{\sbt}$ be a simplicial group with realization $G = |G_{\sbt}|$, and let $BG$ be the topological bar construction of $G$. It will be convenient for us to let $\mc U_{BG}$ refer to the category of unbased spaces over $BG$ that are homotopy equivalent to cell complexes, as opposed to all spaces over $BG$. Recall that $\Delta_{BG} \subseteq \mc U_{BG}$ is the subcategory of spaces over $BG$ consisting only of the simplices $\Delta^p \to BG$ for varying $p \geq 0$ and the compositions of face maps. Note that a homotopy functor on this subcategory must send every map to a weak equivalence.

\begin{prop}\label{adjunction2}
For reduced homotopy functors on spaces over $BG$, the coassembly map is the unit of the adjunction of homotopy categories
\[ \xymatrix @C=1.5in @R=0.5in{
*+<16pt>[F-:<16pt>]\txt{ Reduced homotopy functors \\ $F: \mc U_{BG}^\op$ $ \rightarrow \mc Sp$ } \ar@/^/[r]^-*\txt{restrict} \ar@{}[r]|-{\perp} &
*+<16pt>[F-:<16pt>]\txt{ Homotopy functors \\ $F: \Delta_{BG}^\op$ $ \rightarrow \mc Sp$ } \ar@/^/[l]^-*\txt{$\underset{\Delta^p \to X}\holim F(\Delta^p)$}
} \] 
\end{prop}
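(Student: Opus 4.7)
The plan is to recognize the two functors in the statement as restriction and right Kan extension along the inclusion $i\colon\Delta_{BG}\hookrightarrow\mc U_{BG}$, and then to derive the resulting adjunction. At the underived level, the right adjoint of $i^*$ is the right Kan extension $i_*=\operatorname{Ran}_{i^{\op}}$, whose value at $X\to BG$ is
\[ (i_*F)(X)\;=\;\lim_{(\Delta^p\to X)\in\Delta_X}F(\Delta^p), \]
since the comma category governing this Kan extension is precisely the category of simplices $\Delta_X$. Equipping each functor category with the injective model structure, $i^*$ preserves cofibrations and all weak equivalences and is therefore left Quillen, so the derived adjunction exists on the corresponding homotopy categories with right adjoint the homotopy limit over $\Delta_X$, as stated.

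\textbf{The unit and the subcategory conditions.} The derived unit at $F$, evaluated at $X$, is the canonical comparison
\[ F(X)\;\longrightarrow\;\holim_{(\Delta^p\to X)\in\Delta_X}F(\Delta^p), \]
which is the coassembly map of Section~2 (once the two formulas written for coassembly there and here are identified using the reducedness axiom to pass between $F(\Delta^p)$ and $F((\Delta^p\amalg BG)\to BG)$). It remains to check that the derived adjunction restricts to the named subcategories. Restriction obviously takes a reduced homotopy functor on $\mc U_{BG}$ to a homotopy functor on $\Delta_{BG}$, and reducedness of $X\mapsto\holim_{\Delta_X}F$ at $\emptyset$ is automatic, since $\Delta_{\emptyset}$ is empty and the homotopy limit over an empty category is contractible (terminal in spectra).

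\textbf{Main obstacle.} The substantive step is homotopy invariance of $X\mapsto\holim_{\Delta_X}F$ whenever $F$ is a homotopy functor on $\Delta_{BG}$. Given a weak equivalence $X\to X'$ over $BG$, the induced functor $\Delta_X\to\Delta_{X'}$ is generally not an equivalence of categories, so I would invoke the homotopy cofinality criterion (Quillen's Theorem A) and check that the relevant over-categories associated to each object $\sigma'\colon\Delta^q\to X'$ have contractible classifying space. This should reduce, via the last-vertex equivalence $|\Delta_{(-)}|\simar(-)$, to the fact that the homotopy fibers of $|\Delta_X|\to|\Delta_{X'}|$ are contractible when $X\to X'$ is a weak equivalence. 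I expect this cofinality argument to be the only real technical step; everything else then follows formally from the Quillen adjunction machinery.
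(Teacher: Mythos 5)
Your overall architecture matches the paper's: view the two functors as restriction and (homotopy) right Kan extension along $\Delta_{BG}\hookrightarrow\mathcal U_{BG}$, note that the unit is the canonical comparison map, and check that the adjunction descends to the two named subcategories. The paper handles the descent by citing \cite[\S 5]{cohen_umkehr} and \cite[\S 7]{malkiewich2015tower} for the fact that this model of the homotopy right Kan extension sends homotopy functors on $\Delta_{BG}^{\op}$ to reduced homotopy functors on $\mathcal U_{BG}^{\op}$; you instead attempt to reprove this, which is where the issue arises.

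Your reducedness check (that $\Delta_{\emptyset}=\emptyset$, so the homotopy limit is terminal) is fine. The gap is in the homotopy-invariance step. You propose to invoke Quillen's Theorem A and then ``reduce'' contractibility of the relevant comma categories to contractibility of the homotopy fibers of $|\Delta_X|\to|\Delta_{X'}|$ via the last-vertex equivalence. That implication runs the wrong way: Theorem A says that if the comma categories are contractible then the nerve map is an equivalence, not the converse. Knowing that $|\Delta_X|\to|\Delta_{X'}|$ is a weak equivalence does not, by itself, make the comma categories of $\Delta_X\to\Delta_{X'}$ contractible, so the cofinality hypothesis you need is not established. What rescues the argument is that the diagram $F$ on $\Delta_{BG}^{\op}$ sends \emph{every} morphism to a stable equivalence (every map in $\Delta_{BG}$ is a weak equivalence of contractible spaces). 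For such diagrams, $\holim_{\Delta_X^{\op}}F$ depends only on the weak homotopy type of $|\Delta_X|$ (together with the associated local coefficient system): it is the spectrum of sections of the parametrized spectrum over $|\Delta_X|$ obtained from $F$, and pullback along a weak equivalence of base spaces preserves section spectra. This, rather than a comma-category contractibility argument, is the content of the results in the papers the authors cite, and is also the reason the paper offers the ``alternatively, $F_{\%}$ is a spectrum of sections'' description at the end of Section 2. So your strategy is right, but the final technical step as written does not go through and needs to be replaced by (or supplemented with) the local-coefficient-system / section-spectrum argument.
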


\begin{proof}
We first examine the larger homotopy category of all functors. It is standard that the homotopy right Kan extension is the right adjoint of restriction. Furthermore, the canonical map of $F$ into the extension of the restriction of $F$ is the unit of this adjunction. By \cite[\S 5]{cohen_umkehr} or \cite[\S 7]{malkiewich2015tower}, this particular model for the homotopy right Kan extension sends homotopy functors to reduced homotopy functors, so the adjunction descends to these subcategories, with the same unit.
\end{proof}

Let $\bG$ be the simplicially enriched category with one object $[e]$ and morphism space $G_{\sbt}$. Note that $BG \cong |\bG|$. Let $\cG$ be the ``cone'' category with one additional object $[G]$ and one additional nontrivial morphism $[G] \to [e]$. This is isomorphic to the full subcategory of the enriched orbit category $\mc O(G)^\op$ on the orbits $G/e$ and $G/G$. Let $\iota\colon \bG \to \cG$ be the inclusion.

\begin{rem}\label{B'G} If $X$ is a $G$-space or na\" ive $G$-spectrum then $X^G$ and $X = X^{\{e\}}$ form a diagram over $\cG$. If $X$ is a genuine orthogonal $G$-spectrum, the same is true for the genuine fixed points $X^G$, by taking a fibrant replacement then passing to the underlying na\" ive $G$-spectrum.
\end{rem}

\begin{prop}\label{adjunction1}
	For na{\"i}ve $G$-spectra, the map $(-)^G \to (-)^{hG}$ is equivalent to the unit of the adjunction of homotopy categories
	\[ \resizebox{\textwidth}{!}{
		\xymatrix @C=1.5in @R=0.5in{
		*+<16pt>[F-:<16pt>]\txt{ Enriched $\cG$ diagrams of spectra } \ar@/^/[r]^-*\txt{$\iota^*$} \ar@{}[r]|-{\perp} &
		*+<16pt>[F-:<16pt>]\txt{ Enriched $\bG$ diagrams of spectra \\ (i.e. spectra with $G$-action) } \ar@/^/[l]^-*\txt{enriched homotopy \\ right Kan extension}
	}
	} \] 
	evaluated at $[G]$. 
\end{prop}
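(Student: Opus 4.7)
The plan is to unpack both sides of the adjunction concretely and match the unit evaluated at $[G]$ with the homotopy limit map. The existence of the adjunction is standard (restriction of enriched diagrams is right adjoint to left Kan extension and left adjoint to right Kan extension), so all the work lies in computing the right Kan extension $\iota_*$ explicitly and then identifying the unit.

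First I would unpack what an enriched $\cG$-diagram of spectra is. Using the identification of $\cG$ with the full subcategory of the enriched orbit category $\mc O(G)^{\op}$ on $G/G$ and $G/e$, the nontrivial morphism spaces are $\Hom([e],[e]) = G_{\sbt}$ and $\Hom([G],[e]) = *$, while $\Hom([e],[G]) = \emptyset$. Enriched composition then forces the unique generator $[G] \to [e]$ to be invariant under post-composition by $G_{\sbt}$. Hence a $\cG$-diagram of spectra is a triple $(Y, X, Y \to X)$ consisting of a na\"ive $G$-spectrum $X$ (the value at $[e]$), a spectrum $Y$ (the value at $[G]$), and a $G$-invariant map $Y \to X$, equivalently a map $Y \to X^G$. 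Under this dictionary, the restriction $\iota^*$ forgets $Y$ and retains only the $G$-spectrum $X$.

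Next I would compute the enriched homotopy right Kan extension $\iota_*$ via its comma categories. For $c = [e]$, the enriched comma $[e] \downarrow \iota$ has object space $G_{\sbt}$ with a unique morphism between any two objects, hence is contractible; so $(\iota_* X)([e]) \simeq X$. For $c = [G]$, the enriched comma $[G] \downarrow \iota$ has a single object whose automorphism space is $G_{\sbt}$, and is therefore canonically isomorphic to $\bG$ itself. It follows that $(\iota_* X)([G])$ is the enriched homotopy limit of $X$ over $\bG$, which by the Bousfield--Kan formula (cotensor with a cofibrant replacement of the terminal $\bG$-diagram) is the homotopy fixed-point spectrum $X^{hG}$. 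Assembling these, the unit at a $\cG$-diagram $(Y \to X^G, X)$ is the map of $\cG$-diagrams that is the identity at $[e]$ and the canonical map $Y \to X^G \to X^{hG}$ at $[G]$. Applied to the ``canonical'' $\cG$-diagram $(X^G \xrightarrow{\id} X^G, X)$ associated to a $G$-spectrum $X$ (see \autoref{B'G}), the unit evaluated at $[G]$ becomes precisely the homotopy limit map $X^G \to X^{hG}$, as claimed.

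The main obstacle is making the identification of the enriched homotopy right Kan extension with the standard model of $X^{hG}$ precise, especially for simplicial or topological $G$ and for orthogonal $G$-spectra. I would handle this by first fibrantly replacing and then passing to the underlying na\"ive $G$-spectrum via \autoref{B'G}, reducing the claim to the well-known fact that the enriched homotopy limit of a na\"ive $G$-spectrum over $\bG$ computes its homotopy fixed points. The remaining identification of the unit with the inclusion $X^G \to X^{hG}$ is then essentially formal from the universal property of the right Kan extension.
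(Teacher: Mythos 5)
Your proposal is correct and is essentially the detailed unpacking of the paper's one‑sentence proof, which simply cites the local (pointwise) formula for an enriched homotopy right Kan extension \cite[7.6.6]{riehl_cat_htpy}: the weight $\cG([G],\iota(-))$ is the terminal $\bG$‑weight, so the extension at $[G]$ is the (conical) enriched homotopy limit of $X$ over $\bG$, i.e.\ $X^{hG}$, and the unit is the canonical map $Y \to X^{hG}$, which for the diagram of \autoref{B'G} is $X^G \to X^{hG}$. One small caution: in the enriched setting the pointwise Kan extension is by definition a weighted limit rather than a conical limit over a comma category, so the phrase ``enriched comma'' should really be read as shorthand for ``weighted limit with the weight $\cG(c,\iota(-))$''; your invocation of the Bousfield--Kan cotensor formula shows you are already thinking of it this way, and since both weights here ($G_{\sbt}$ at $[e]$, terminal at $[G]$) give the expected answers, the argument goes through.
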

\begin{proof}
	This is immediate from the local formula for an enriched homotopy right Kan extension \cite[7.6.6]{riehl_cat_htpy}.
\end{proof}

The next step is to relate the categories on the left-hand side of these adjunctions together. Morally, we take each homotopy functor $F$ to the diagram on $\cG$ given by $F(BG)$ and $F(EG)$.

There are two problems to address here. The first problem is that this is not an equivalence of homotopy categories, but we can fix that by localizing the category of homotopy functors along the maps that are equivalences on $BG$ and $EG$. The second problem is that $G$ will not act on $F(EG)$ unless we make $F$ simplicially enriched. We fix the second problem using the following result.

\begin{lem}\label{replace_by_enriched}
	Every contravariant homotopy functor $F$ to spaces or spectra can be replaced by a simplicially enriched functor, by a zig-zag of equivalences of functors
	\[ \xymatrix{ F & \ar[l]_-\sim F' \ar[r]^-\sim & \widetilde{F}' } \]
	that is itself functorial in $F$.
\end{lem}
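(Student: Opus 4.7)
The plan is to precompose $F$ with the ``fattening'' functor $|\Sing(-)|$, which factors through simplicial sets, and then replace the result by a formula that is manifestly simplicially enriched.

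First, I would set $F'(X) := F(|\Sing X|)$. Since $|\Sing X| \to X$ is a weak homotopy equivalence and $F$ is a homotopy functor, applying $F$ gives a natural weak equivalence $F \xra{\sim} F'$. This step is formal and functorial in $F$, because no choices are involved: both the singular complex and realization are strict functors, and the counit $|\Sing X| \to X$ is natural in $X$.

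Second, I would exploit the fact that $F'$ factors as $\Top^\op \xra{\Sing} \sSet^\op \xra{\bar F} \mc Sp$, where $\bar F(K) := F(|K|)$. This allows me to define $\widetilde F'$ by a formula whose simplicial enrichment is built in, for instance the homotopy end / Bousfield--Kan homotopy limit
\[
\widetilde F'(X) \;:=\; \holim_{(\Delta^n \to \Sing X) \in \Delta_{\Sing X}^\op} F(\Delta^n).
\]
The simplicial enrichment is manifest here because a simplicial map $\Sing X \to \Sing Y$ induces a functor between simplex categories, and $\Map_\Top(X,Y)$ maps into $\Map_\sSet(\Sing X, \Sing Y)$ as simplicial sets, with the $\holim$ functorially turning simplicial families of maps between the indexing categories into simplicial families of maps of spectra.

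Third, I would build the comparison $F' \to \widetilde F'$ as the map from the value at the identity simplex $\Sing X \to \Sing X$ into the homotopy limit --- concretely, the canonical map $F(|\Sing X|) = \bar F(\Sing X) \to \holim_{\Delta^n \to \Sing X} \bar F(\Delta^n)$, which is a weak equivalence because $\bar F$ is a homotopy functor and $|\Sing X|$ is the homotopy colimit of its simplices via the last-vertex equivalence (the dual of the situation used in the earlier adjunction argument). Functoriality of the whole zig-zag in $F$ is automatic since every step is given by a formula in $F$ with no implicit choices.

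The main obstacle is verifying the enrichment rigorously: one must pick a model of $\holim$ (Bousfield--Kan is the natural one) whose cotensor with simplicial sets interacts strictly with the reindexing along maps $\Sing X \to \Sing Y$, so that the resulting assignment $X \mapsto \widetilde F'(X)$ really gives a continuous, simplicial functor and not merely a pointwise equivalent object. Once that bookkeeping is in place, the two equivalences assemble into the required zig-zag $F \xla{\sim} F' \xra{\sim} \widetilde F'$, natural in $F$.
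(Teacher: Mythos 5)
Your proposal goes wrong at the key step. The map you write down,
\[
F(|\Sing X|) \;=\; \bar F(\Sing X)\;\longrightarrow\; \holim_{(\Delta^n \to \Sing X)\in\Delta_{\Sing X}^\op}\bar F(\Delta^n),
\]
is precisely the coassembly map $F \to F_{\%}$ from the review section of the paper. It is \emph{not} a weak equivalence for a general homotopy functor $F$; the fact that $|\Sing X| \simeq \hocolim_{\Delta_{\Sing X}}\Delta^n$ gives you an equivalence $F(X) \simeq \holim F(\Delta^n)$ only when $F$ is \emph{excisive} (takes homotopy colimits to homotopy limits). A homotopy functor is not assumed excisive, and the entire content of the paper is that this map is generically nontrivial and needs to be identified with a homotopy limit map. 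So your $\widetilde F'$ is the universal excisive approximation of $F$, not an enriched replacement of $F$ itself, and your second arrow is not an equivalence. (There is also a minor direction issue in your first step: since $F$ is contravariant, $|\Sing X|\to X$ induces $F(X)\to F(|\Sing X|)$, which points the wrong way relative to the zig-zag $F \xleftarrow{\sim} F' \xrightarrow{\sim} \widetilde F'$ asserted in the lemma.)

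The paper's construction is genuinely different and avoids this trap. It first applies $|\Sing(-)|$ to the \emph{output} of $F$, setting $F'(X) = |\Sing F(X)|$, so that the counit $F'\to F$ gives a levelwise equivalence with the right variance, and so that degeneracy maps become levelwise cofibrations. It then forms $\widetilde F'(X) = |\,n \mapsto F'(\Delta^n\times X)\,|$, a realization of a simplicial object built from $\Delta^n\times X$ rather than $\Delta^n$ alone. Because $\Delta^n\times X \simeq X$ for every $n$, this simplicial spectrum is essentially constant; combined with Reedy cofibrancy (goodness), its realization is equivalent to the $0$-th term $F'(X)$ via inclusion of simplicial level $0$. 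The simplicial enrichment is then extracted from the explicit appearance of $\Delta^n$ in the formula, using the projection and action maps $Y_k\times\Delta^k\times X\to\Delta^k\times Z$. Your idea of routing the enrichment through $\Sing$ is a reasonable instinct, but the homotopy limit over simplices collapses $X$ to a point in each term and thus changes the homotopy type unless $F$ is excisive; the product $\Delta^n\times X$ in the paper's formula is precisely what preserves the homotopy type.
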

\begin{proof}
	This is by a variant of the trick used in \cite{waldhausen} to replace functors by homotopy functors. It adapts from covariant to contravariant functors by replacing $\Map(\Delta^p,-)$ with $\Delta^p \times -$. 
		
	If $F$ lands in orthogonal spectra, regard it as landing in prespectra or symmetric spectra, and replace the spectrum $F(X)$ at each level by by $F'(X) = |\Sing F(X)|$. The effect of this is that each degeneracy map $\Delta^p \to \Delta^q$ induces a levelwise cofibration $F'(\Delta^q \times X) \to F'(\Delta^p \times X)$. 
	Then pass back up to orthogonal spectra if desired, and replace $F'(X)$ again by the realization
	\[ \widetilde{F}'(X) = | n \mapsto F'(\Delta^n \times X) |. \] 
	This defines a functor that receives a map from $F'$ by inclusion of simplicial level 0. The map is an equivalence on each spectrum level, because $F'$ is a homotopy functor and the simplicial space defined above is good and therefore Reedy cofibrant \cite{Lillig}. We extend the functor structure on $\widetilde F'$ to a simplicial enrichment by taking each map $|Y_{\sbt}| \times X \to Z$ to the realization of the map that at level $k$ is
	\[ Y_k \times F'(\Delta^k \times X) \to F'(\Delta^k \times Z), \]
	obtained from the map of spaces
	\[ Y_k \times \Delta^k \times X \to \Delta^k \times Z \]
	whose coordinates are the action $Y_k \times \Delta^k \times X \to Z$ and the projection to $\Delta^k$.
\end{proof}

\begin{prop}
	The forgetful functors in the following diagram are equivalences of homotopy categories. Here ``enriched'' means simplicially enriched.
	\begin{figure}[h]\label{left_hand_equivalences}
		\[ 
			\xymatrix @C=1.5in @R=0.5in{
				*+<16pt>[F-:<16pt>]\txt{ Reduced homotopy functors \\ $F\colon \mc U_{BG}^\op \rightarrow \mc Sp$ (localized) }
				\\
				*+<16pt>[F-:<16pt>]\txt{ Enriched reduced homotopy functors \\ $F\colon \mc U_{BG}^\op \rightarrow \mc Sp$ (localized) }
				\ar[u]_-\sim
				\ar[d]^-\sim
				\\
				*+<16pt>[F-:<16pt>]\txt{ Enriched reduced functors \\ $F\colon \mc U_{BG}^\op \rightarrow \mc Sp$ (localized) }
				\ar[d]^-\sim
				\\
				*+<16pt>[F-:<16pt>]\txt{ Enriched functors \\ $\cG \to \mc Sp$ }
		} \] 
	\end{figure}
\end{prop}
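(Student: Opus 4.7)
I will verify each of the three forgetful functors is an equivalence of homotopy categories, working from top to bottom. For the top arrow $(2)\to(1)$, I invoke \autoref{replace_by_enriched}, which provides a functorial zig-zag of natural stable equivalences $F \xleftarrow{\sim} F' \xrightarrow{\sim} \widetilde F'$ turning any reduced homotopy functor into a simplicially enriched one; after localization this furnishes a quasi-inverse to the forgetful.

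For the middle arrow $(2)\to(3)$, the point is that every object of $\mc U_{BG}$ is by definition homotopy equivalent to a cell complex, so a functorial CW-replacement over $BG$ converts any enriched reduced functor into an equivalent enriched reduced homotopy functor. The simplicial enrichment turns fiberwise homotopies --- to which weak equivalences between cell complexes over $BG$ reduce --- into simplicial homotopies of spectrum maps, hence stable equivalences.

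The bottom arrow $(3)\to(4)$, obtained by restricting along the enriched inclusion $\iota\colon\cG\hookrightarrow\mc U_{BG}^\op$ with $[e]\mapsto EG$ and $[G]\mapsto BG$, is the main content. I first verify that $\iota$ is essentially fully faithful on enriched mapping spaces: with the standard bar-construction model, $\Map_{\mc U_{BG}^\op}(EG,EG)\simeq G$ via deck transformations, $\Map_{\mc U_{BG}^\op}(BG,EG)\simeq *$, and $\Map_{\mc U_{BG}^\op}(EG,BG)$ is empty for non-trivial $G$, matching the morphism spaces of $\cG$. Then I build a quasi-inverse via an enriched homotopy right Kan extension $\mathrm{RKan}_\iota$: given a $\cG$-diagram $\Phi$, the Borel construction turns the $G$-spectrum $\Phi([e])$ into a parametrized spectrum $\xi_\Phi$ over $BG$, and one sets $(\mathrm{RKan}_\iota\Phi)(X\to BG) = \Gamma(X;\xi_\Phi|_X)$, together with the map from $\Phi([G])$ into global sections. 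The counit $\iota^*\mathrm{RKan}_\iota\Phi\to\Phi$ is an equivalence by the fully-faithful computation above.

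The hard part will be verifying the unit $F\to\mathrm{RKan}_\iota\iota^*F$ is a stable equivalence on every $X\to BG$. This is the density claim that a reduced enriched functor on $\mc U_{BG}^\op$ is, in the localized category, recovered from its restriction to $\cG$ via the sections formula. It is the technical heart of the proposition and combines the enrichment with reducedness and the homotopy-functor property established in the previous step.
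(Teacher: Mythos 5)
Your handling of the top arrow via \autoref{replace_by_enriched} matches the paper's, and your identification $[e]\mapsto EG$, $[G]\mapsto BG$ together with the mapping-space computations for $\cG$ as a full subcategory of $\mc U_{BG}^\op$ are correct. But there are two genuine problems.

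\textbf{Middle arrow.} You propose ``a functorial CW-replacement over $BG$'' to convert enriched functors into homotopy functors. This is the wrong replacement: the objects of $\mc U_{BG}$ are, by definition, already homotopy equivalent to cell complexes, so they are already essentially cofibrant, and CW-replacement buys nothing. What is missing is \emph{fibrancy} in $\mc U_{BG}$, i.e.\ the structure maps to $BG$ need to be fibrations. Your statement that ``weak equivalences between cell complexes over $BG$ reduce to fiberwise homotopies'' is false: for example, the inclusion $\{0\}\hookrightarrow[0,1]$ over $[0,1]$ is a weak equivalence of cell complexes over $[0,1]$ but is visibly not a fiberwise homotopy equivalence, since $[0,1]\to[0,1]$ has no section landing in $\{0\}$. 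Whitehead's theorem converts a weak equivalence into a fiberwise homotopy equivalence only when both objects are cofibrant \emph{and} fibrant. The paper therefore composes $F$ with a fibrant replacement (not a cofibrant one), and then checks that this respects the localization by noting that $EG$ and $BG$ are already fibrant in $\mc U_{BG}$; you should do the same.

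\textbf{Bottom arrow.} Two issues. First, the formula $(\mathrm{RKan}_\iota\Phi)(X\to BG)=\Gamma(X;\xi_\Phi|_X)$ is not the (homotopy) right Kan extension along $\iota$. Since $\iota$ is enriched fully faithful, the counit of the derived adjunction is an equivalence, so the right Kan extension must satisfy $\mathrm{RKan}_\iota\Phi(BG)\simeq\Phi([G])$; your sections formula instead gives $\Gamma(BG;\xi_\Phi)\simeq\Phi([e])^{hG}$, and these differ in general by precisely the homotopy limit map the whole paper is about. (Indeed, the sections formula is the excisive approximation $F_\%$, not the Kan extension along $\cG$.) Second, and more importantly, you identify the unit check ``$F\to\mathrm{RKan}_\iota\iota^*F$ is a stable equivalence on every $X\to BG$'' as the technical heart and leave it open — but this is exactly the wrong thing to try to prove, and in the localized category it is automatic. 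The left-hand side is localized along maps of functors that are equivalences at $EG$ and $BG$. Since the counit $\iota^*\mathrm{RKan}_\iota\iota^*F\to\iota^*F$ is an equivalence (fullness), the triangle identity shows $\iota^*\eta_F$ is an equivalence as well, i.e.\ the unit is an equivalence at $EG$ and $BG$, hence is inverted in the localization. This one observation — that the localization is engineered precisely so that the unit becomes invertible — is what makes the bottom arrow an equivalence; the paper's proof is exactly this, and it makes the ``hard part'' disappear.
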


\begin{proof}
	The construction of \autoref{replace_by_enriched} gives an inverse to the first equivalence. Note this is still well-defined after localizing because the construction preserves the property of a map of functors $F \to F'$ being an equivalence on one particular space $X$. For the second pair of categories, by Whitehead's theorem any enriched functor is a homotopy functor on the cofibrant and fibrant objects. Hence we can invert the forgetful functor by composing each $F$ with a fibrant replacement in $\mc U_{BG}$. To check this respects the localization, we note that when we turn an enriched functor into a homotopy functor, it will have equivalent values on $EG$ and $BG$, because these two spaces are already fibrant. For the final pair of categories, the restriction functor has the enriched homotopy right Kan extension as its right adjoint, and this adjunction clearly descends to the localization. In fact, since $C(\bG)$ is a full subcategory of $\mc U_{BG}^\op$, the counit is an equivalence, and therefore by the definition of our localization, the unit is also an equivalence, hence we get an equivalence of categories.
\end{proof}

Next we relate the categories on the right-hand side in \autoref{adjunction1} and \autoref{adjunction2} using parametrized spectra. To be definite, we will now assume that $\mc Sp$ means orthogonal spectra. The category of parametrized orthogonal spectra is defined in \cite[11.2.3]{ms}, and its homotopy category is obtained by inverting the $\pi_*$-isomorphisms from \cite[12.3.4]{ms}.

\begin{figure}[h]\label{right_hand_equivalences}
	\[ 
	\xymatrix @C=1.5in @R=0.5in{
		*+<16pt>[F-:<16pt>]\txt{ Homotopy functors \\ $F\colon \Delta_{BG}^\op \rightarrow \mc Sp$ }
		\ar[d]^-*\txt{$\underset{\Delta_{BG}^\op}\hocolim F(\Delta^p)$}_-\sim
		\\
		*+<16pt>[F-:<16pt>]\txt{ Parametrized spectra \\ over $|\Delta_{BG}^\op|$ }
		\\
		*+<16pt>[F-:<16pt>]\txt{ Enriched functors $\bG \to \mc Sp$ \\ (spectra with left $G$-action) }
		\ar[u]_-{l_!(QE \times_G -)}^-\sim
	} \] 
\end{figure}
	
The first part of the equivalence is as follows. Given a diagram $F$ of orthogonal spectra over $\mc C$, at each spectrum level we can take its Bousfield-Kan homotopy colimit as a diagram of unbased spaces, giving a retractive space over $|\mc C|$. In total this gives a parametrized spectrum $\underset{\mc C}\hocolim F$ over $|\mc C|$, see \cite[\S 4]{lind_malkiewich_morita}.

The second part of the equivalence is the Borel construction $EG \times_G -$, followed by pullback along the equivalence $|\Delta_{BG}^\op| \simar BG$. Alternatively, we make the following construction. Let $E$ be any weakly contractible space with a free right $G$-action, with a map $E/G \to |\Delta_{BG}^\op|$. Let $QE$ be its cofibrant replacement as a free $G$-space, so that there is an equivalence $l\colon QE/G \simar BG$. If $X$ is a spectrum with $G$-action, take a cofibrant replacement if necessary so that its levels are well-based, then take $QE \times_G X$, which is a parametrized spectrum over $QE/G$, and push it forward along $l$ to $|\Delta_{BG}^\op|$. We will see in the next proposition that this is always equivalent to the Borel construction, but it is convenient to allow ourselves to pick a particular space $E$ with this property, rather than having to use the pullback of $EG$ to $|\Delta_{BG}^\op|$.
%

\begin{prop}
	These are equivalences of homotopy categories, and the second is independent of the choice of $E$, up to isomorphism.
\end{prop}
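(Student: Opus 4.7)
The plan is to verify both equivalences and the independence statement separately, reducing everything to standard results in parametrized stable homotopy theory. The unifying observation is that via the equivalence $|\Delta_{BG}^\op| \simar BG$, both sides present the homotopy theory of naive $G$-spectra: the top row via the simplicial presentation of $BG$ as an $\infty$-groupoid, and the bottom row via the Borel construction.

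For the first (hocolim) equivalence, I would invoke the main result of \cite[\S 4]{lind_malkiewich_morita}, which establishes that the levelwise Bousfield-Kan hocolim gives an equivalence between the homotopy category of diagrams of orthogonal spectra on a small category $\mc C$ (with levelwise equivalences) and the homotopy category of parametrized spectra over $|\mc C|$. A candidate inverse sends a parametrized spectrum $Y$ to the diagram whose value at $(\sigma\colon \Delta^p \to BG) \in \Delta_{BG}$ is the spectrum of sections of the pullback $\sigma^*Y$; this is automatically a homotopy functor since each $\Delta^p$ is contractible. Checking that the unit and counit are equivalences is a descent argument, standard in this setting.

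For the second (Borel) equivalence, I would use that $QE$ is a cofibrant model for $EG$ as a free $G$-space, so the Borel construction $X \mapsto QE \times_G X$ is a Quillen equivalence between spectra with $G$-action and parametrized spectra over $QE/G$; see \cite{ms}. Pushforward $l_!$ along a weak equivalence of base spaces is also a Quillen equivalence on parametrized spectra by \cite{ms}. Composing these two equivalences gives the desired bottom functor. For independence of $E$, given two choices $E, E'$, I would form $E \times E'$ with its diagonal $G$-action and take a cofibrant replacement $Q(E \times E')$ in free $G$-spaces. The two projections induce $G$-equivariant weak equivalences $Q(E \times E') \to QE$ and $Q(E \times E') \to QE'$, since the factor being projected away is contractible. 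For any $G$-spectrum $X$, taking Borel constructions and pushing forward to $|\Delta_{BG}^\op|$ yields natural weak equivalences between the three parametrized spectra $l_!(QE \times_G X)$, $l_!(Q(E \times E') \times_G X)$, and $l'_!(QE' \times_G X)$, producing a natural isomorphism of the two functors in the homotopy category.

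The main obstacle will be the model-theoretic bookkeeping needed to assert that both hocolim and Borel construction descend cleanly to equivalences of homotopy categories, in particular keeping track of cofibrancy/fibrancy hypotheses on the $G$-spectra and well-basedness of spectrum levels when invoking the foundational results of \cite{ms} and \cite{lind_malkiewich_morita}. Once those foundations are in place, each of the three claims reduces to a formal composition of known equivalences.
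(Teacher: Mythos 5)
Your proposal follows essentially the same route as the paper: both the paper and you cite \cite{lind_malkiewich_morita} for the hocolim equivalence and both reduce the second equivalence to the fact that the Borel construction and base-change along the weak equivalence $|\Delta_{BG}^\op| \to BG$ are each equivalences of homotopy categories. The notable difference is the independence-of-$E$ argument. The paper proves it by showing that for any choice of $E$, the functor $l_!(QE \times_G -)$ is naturally isomorphic to one canonical construction, namely the honest Borel construction $EG \times_G -$ followed by pullback to $|\Delta_{BG}^\op|$; independence then falls out for free from the same comparison square that verifies the equivalence. You instead compare two choices $E, E'$ head-on via the diagonal $G$-space $E \times E'$ and its two projections. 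Both arguments are valid; the paper's is slightly more economical since the comparison to the Borel construction does double duty. One small point to watch in your write-up of the first equivalence: the homotopy category of \emph{homotopy} functors on $\Delta_{BG}^\op$ is not a priori the homotopy category of the aggregate model structure of \cite[Thm 4.4]{lind_malkiewich_morita}, so one needs the observation (which the paper makes explicit) that in this model structure the fibrant objects are precisely homotopy functors, since every morphism in $\Delta_{BG}$ is a weak equivalence of contractible spaces. Your sketch of an inverse via sections of the pullback $\sigma^*Y$ is plausible but sidesteps this identification; it would need the same fibrancy bookkeeping you flag at the end.
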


\begin{proof}
	For the first one, the homotopy category of homotopy functors on $\Delta_{BG}^\op$ is equivalent to the homotopy category of functors that are fibrant in the aggregate model structure of \cite[Thm 4.4]{lind_malkiewich_morita}. Hence $\underset{\Delta_{BG}^\op}\hocolim F(\Delta^p)$ is naturally isomorphic as a map of homotopy categories to the left Quillen equivalence of \cite[Thm 4.5]{lind_malkiewich_morita}, and is therefore an equivalence. On the other hand, for a $G$-space $X$ the horizontal maps in the following square are equivalences:
	\[ \xymatrix{
		QE \times_G X \ar[d] \ar[r]^-\sim & EG \times_G X \ar[d] \\
		|\Delta_{BG}^\op| \ar[r]^-\sim & BG
	} \]
	Hence the functor $QE \times_G -$ is equivalent to the Borel construction $EG \times_G -$ (which lands in spectra over $BG$) followed by the pullback from $BG$ to $|\Delta_{BG}^\op|$. (Under the cofibrancy assumptions on $X$, the same is also true if we push $QE \times_G X$ forward along $l$.) This factorization into Borel-then-pullback also holds at the level of homotopy categories, since the Borel construction preserves all equivalences and outputs a fibration, on which the pullback preserves equivalences. Then the Borel construction is an equivalence by \cite[Appendix B]{ando_blumberg_gepner} or \cite[Thm 4.5]{lind_malkiewich_morita}, and the derived pullback is an equivalence by \cite[Prop 12.6.7]{ms}.
\end{proof}

Now we may finish the proof of  \autoref{intro_first}.

\begin{thm}\label{coassemblymap}
For any reduced homotopy functor $F\colon \mc U_{BG}^\op \to \mc Sp$, the coassembly map on $BG$ is isomorphic in the homotopy category to the map $F(BG) \to F(EG)^{hG}$ induced by the functoriality of $F$.
\end{thm}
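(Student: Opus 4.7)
The plan is to assemble Propositions~\ref{adjunction2} and~\ref{adjunction1}, together with the two equivalences of homotopy categories from the preceding figures, into a single square of homotopy categories. The four corners are: (top-left) reduced homotopy functors $\mc U_{BG}^\op \to \mc Sp$, (top-right) homotopy functors $\Delta_{BG}^\op \to \mc Sp$, (bottom-left) enriched functors $\cG \to \mc Sp$, and (bottom-right) spectra with $G$-action, viewed as enriched functors $\bG \to \mc Sp$. The horizontal arrows are restriction functors, whose units are the coassembly map (Proposition~\ref{adjunction2}) and the homotopy limit map evaluated at $[G]$ (Proposition~\ref{adjunction1}); the vertical arrows are the two established equivalences. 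Once the square is shown to commute up to natural equivalence, the units of the two rows are identified by the universal property of adjunctions, and evaluating them at the distinguished objects $BG$ and $[G]$ delivers exactly the map of the theorem.

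The substantive step is verifying commutativity. Tracing one direction, an enriched reduced homotopy functor $F$ restricted to the full subcategory $\cG$ and then along $\iota$ to $\bG$ simply yields $F(EG)$ with its $G$-action from the enrichment. Tracing the other direction, $F|_{\Delta_{BG}^\op}$ is sent to the parametrized spectrum $\hocolim_{\Delta^p \to BG} F(\Delta^p \to BG)$ over $|\Delta_{BG}^\op| \simeq BG$, which the right-hand vertical equivalence interprets as a spectrum with $G$-action via pullback along the cover $QE \to QE/G \simeq BG$. This pullback can be computed as $\hocolim_{\Delta^p \to QE} F(\Delta^p \to BG)$, and the last-vertex equivalence $|\Delta_{QE}^\op| \simar QE$ combined with the enriched homotopy-invariance of $F$ identifies it with $F(QE) \simeq F(EG)$. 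The $G$-action on one side comes from deck transformations of $QE$ and on the other from the enrichment; these agree because enrichment is precisely what lets $F$ respond functorially to the deck-transformation maps.

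The main obstacle will be making this last identification genuinely $G$-equivariant and homotopy-coherent. In particular, one must check that pullback along $QE \to QE/G$ commutes with $\hocolim$ up to natural equivalence of parametrized spectra, that the last-vertex equivalence can be arranged to be $G$-equivariant on $|\Sing(QE)| \simar QE$, and that the chain of equivalences constituting each vertical arrow preserves the $G$-actions that appear. These are standard but delicate checks, and the choice of a cofibrant replacement $QE$ rather than $EG$ itself (as allowed by the discussion preceding the right-hand equivalence) makes them tractable. Once they are in place, the identification of the two units is purely formal: the uniqueness of right adjoints and hence of units up to natural isomorphism, together with the functoriality of our equivalences of homotopy categories, immediately produces the theorem.
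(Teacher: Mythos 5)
Your high-level architecture matches the paper exactly: assemble the two adjunctions of \autoref{adjunction2} and \autoref{adjunction1} with the two chains of vertical equivalences into one large diagram, and reduce the theorem to showing that the square of left adjoints commutes up to natural isomorphism. That is precisely the structure of the paper's proof (the displayed ``big diagram of categories''). However, the crucial step --- verifying the commutativity --- is where you wave your hands, and this is where the paper's actual work lies.

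You describe the right-hand comparison as: pull back the parametrized spectrum $\hocolim_{\Delta^p} F(\Delta^p)$ along the cover $QE \to |\Delta_{BG}^\op|$, rewrite the pullback as $\hocolim_{\Delta^p \to QE} F(\Delta^p)$, and then use the last-vertex equivalence plus homotopy invariance to identify the result with $F(QE) \simeq F(EG)$. Two of these moves are not standard and do not go through as stated. First, the pullback of a Bousfield--Kan homotopy colimit of a diagram on $\Delta_{BG}^\op$ along $|\Delta_{QE}^\op| \to |\Delta_{BG}^\op|$ is not automatically computed by re-indexing over $\Delta_{QE}^\op$; one needs some form of quasifibration argument (the paper invokes a variant of Quillen's Theorem B at exactly this point), and even then the statement concerns homotopy fibers rather than literal pullbacks. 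Second, and more importantly, identifying the result with $F(EG)$ \emph{together with its $G$-action} requires a map to compare against, and no such map falls out of your sketch: the $G$-action by deck transformations on $\Delta_{QE}^\op$ sits in a different place from the $G$-action via enrichment on $F(EG)$, and ``they agree because enrichment lets $F$ respond to the deck transformations'' does not by itself produce a coherent comparison of parametrized spectra over $|\Delta_{BG}^\op|$.

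The paper's resolution of exactly this difficulty is the non-obvious insight you are missing. Rather than using an arbitrary contractible free $G$-space, the paper sets
\[ E = \underset{\Delta_{BG}^\op}\hocolim\, \Map_{BG}(\Delta^p, EG), \]
proves in \autoref{contractible} that this $E$ is weakly contractible (itself a multi-step argument through the twisted arrow category), and then observes that the enriched functoriality of $F$ directly provides evaluation maps
\[ \Map_{BG}(\Delta^p, EG) \times F(EG) \longrightarrow F(\Delta^p) \]
that coequalize the diagonal $G$-action and hence descend to a map of parametrized spectra $QE \times_G F(EG) \to \hocolim_{\Delta^p} F(\Delta^p)$. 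That map is then checked to be an equivalence fiberwise. This specific choice of $E$ is what makes the comparison map exist at all, and it is not reachable by the generic manipulations you describe. So the proposal is architecturally correct but contains a genuine gap at the heart of the argument: you have named the problem (``make this identification genuinely $G$-equivariant and homotopy-coherent'') without supplying the construction that solves it.
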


\begin{proof}
The adjunction from \autoref{adjunction2} descends to the localization we described above, hence we get the following diagram of adjunctions and equivalences of homotopy categories. It remains to check that the equivalences and left adjoints in this figure commute up to some natural isomorphism, so that the figure is an ``equivalence of adjunctions.''

\begin{figure}[h]\label{big diagram of categories}
	\[ \resizebox{\textwidth}{!}{
		\xymatrix @C=1.5in @R=0.5in{
			*+<16pt>[F-:<16pt>]\txt{ Reduced homotopy functors \\ $F\colon \mc U_{BG}^\op \rightarrow \mc Sp$ (localized) } \ar@/^/[r]^-*\txt{restrict} \ar@{}[r]|-{\perp}
			&
			*+<16pt>[F-:<16pt>]\txt{ Homotopy functors \\ $F\colon \Delta_{BG}^\op \rightarrow \mc Sp$ } \ar@/^/[l]^-*\txt{$X \mapsto \underset{\Delta_X^\op}\holim F(\Delta^p)$} \ar[dd]^-*\txt{$\underset{\Delta_{BG}^\op}\hocolim F(\Delta^p)$}_-\sim
			\\
			*+<16pt>[F-:<16pt>]\txt{ Enriched reduced homotopy functors \\ $F\colon \mc U_{BG}^\op \rightarrow \mc Sp$ (localized) }
			\ar[u]_-\sim
			\ar[d]^-\sim
			\\
			*+<16pt>[F-:<16pt>]\txt{ Enriched reduced functors \\ $F\colon \mc U_{BG}^\op \rightarrow \mc Sp$ (localized) }
			\ar[d]^-\sim
			&
			*+<16pt>[F-:<16pt>]\txt{ Parametrized spectra \\ over $|\Delta_{BG}^\op|$ }
			\\
			*+<16pt>[F-:<16pt>]\txt{ Enriched functors \\ $\cG \to \mc Sp$ } \ar@/^/[r]^-*\txt{restrict} \ar@{}[r]|-{\perp} &
			*+<16pt>[F-:<16pt>]\txt{ Enriched functors $\bG \to \mc Sp$ \\ (spectra with left $G$-action) } \ar@/^/[l]^-*\txt{homotopy right \\ Kan extension} \ar [u]_-{QE \times_G -}^-\sim
		}
	} \] 
\end{figure}

To form this natural isomorphism, we assume that $F$ is an enriched reduced homotopy functor on $\mc U_{BG}$. Composing with fibrant replacement, then re-enriching by the equivalences in \autoref{left_hand_equivalences}, we may assume that $F$ sends equivalences of spaces to level equivalences of spectra. We may also compose with $|\Sing -|$ so that it is enriched in topological spaces. These manipulations are natural in $F$, hence we can make these assumptions even if what we are after is an isomorphism that is natural in $F$.

We define
\[ E = \underset{\Delta_{BG}^\op}\hocolim \Map_{BG}(\Delta^p,EG) \]
with $G$ acting on the right on $EG$. By \autoref{contractible} below, $E$ is weakly contractible. Form the following diagram at each spectrum level, in which the second map along the top uses the enriched functoriality of $F$.
\[ \xymatrix{
	QE \times F(EG) \ar[d] \ar[r] &
	\underset{\Delta^p \in \Delta_{BG}^\op}\hocolim \Map_{BG}(\Delta^p,EG) \times F(EG) \ar[d] \ar[r] &
	\underset{\Delta^p \in \Delta_{BG}^\op}\hocolim F(\Delta^p) \\
	QE \times_G F(EG) \ar[r] &
	\underset{\Delta^p \in \Delta_{BG}^\op}\hocolim \Map_{BG}(\Delta^p,EG) \times_G F(EG) \ar@{-->}[ur] &
} \]
This map of spaces induces a map of parametrized spectra over $QE/G \to |\Delta_{BG}^\op|$, or a map from the pushforward of the first to the second over $|\Delta_{BG}^\op|$. To argue that the above map is an equivalence of parametrized spectra, it suffices to argue it is an equivalence at each spectrum level.

To check the composite along the bottom is an equivalence, it suffices to examine the induced map on their homotopy fibers over $|\Delta_{BG}^\op|$. In the target, by a variant of Quillen Theorem B \cite{meyer,quillen2}, the map to $|\Delta_{BG}^\op|$ is a quasifibration, so the fiber $F(\Delta^p)$ is equivalent to the homotopy fiber. In the source, we pick a single $G$-orbit of $QE$ and check that the inclusion of $G \times_G F(EG)$ into the homotopy fiber of $QE \times_G F(EG) \to QE/G$ is an equivalence, by replacing $E$ by a space that is fibrant, then comparing to $EG$. Therefore the above map induces on homotopy fibers a map equivalent to $F(EG) \to F(\Delta^p)$, which is an equivalence because $F$ is a homotopy functor. This proves that the left adjoints commute up to isomorphism.
\end{proof}

\begin{lem}\label{contractible}
	The space $E = \underset{\Delta_{BG}^\op}\hocolim \Map_{BG}(\Delta^p,EG)$ is weakly contractible.
\end{lem}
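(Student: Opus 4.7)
The plan is to construct a natural evaluation map $f\colon E\to EG$ and to prove that $f$ is a weak equivalence; the contractibility of $E$ then follows from that of $EG$. To build $f$, I would use the Bousfield-Kan model: the $n$-th simplicial level has a copy of $\Map_{BG}(\Delta^{p_0}, EG)\times\Delta^n$ for each chain $c_0\to\cdots\to c_n$ in $N(\Delta_{BG}^\op)_n$, corresponding to a nested tower of face inclusions $\Delta^{p_n}\hookrightarrow\cdots\hookrightarrow\Delta^{p_0}$ lying over $BG$. I define $f$ by the formula $(\tilde\sigma, t)\mapsto\tilde\sigma(\phi(t))$, where $\phi\colon \Delta^n\to\Delta^{p_0}$ sends vertex $i$ to the last vertex of the subface $\Delta^{p_i}\subseteq\Delta^{p_0}$. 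A direct check shows that this respects the face and degeneracy identities, so $f$ descends to a continuous map, and composing with $EG\to BG$ recovers the last-vertex map $E\to|\Delta_{BG}^\op|\simar BG$.

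To prove $f$ is a weak equivalence, I would argue that both $E\to BG$ (through $f$) and $EG\to BG$ are quasifibrations over $BG$ and that $f$ is an equivalence on fibers. The quasifibration statement for $E\to BG$ uses the same variant of Quillen's Theorem~B invoked in the proof of \autoref{coassemblymap}: the hypothesis holds because each morphism in $\Delta_{BG}^\op$ induces a restriction $\Map_{BG}(\Delta^p, EG)\to\Map_{BG}(\Delta^{p'}, EG)$ that is a weak equivalence, since extending a section of a trivializable principal $G$-bundle along a CW-pair inclusion has a contractible space of solutions. The fibers of $E\to BG$ are then equivalent to $\Map_{BG}(\Delta^p, EG)\simeq G$, and $f$ restricts on fibers to the evaluation at a vertex, a weak equivalence since $\Delta^p$ is contractible. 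The five lemma for the long exact sequences of the two quasifibrations then gives that $f$ is a weak equivalence.

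The main obstacle is the quasifibration argument and its attendant fiber identification, especially at points of $BG$ that are not last vertices of any single simplex. This is essentially the same technical issue that appears in the proof of \autoref{coassemblymap}, and I would handle it the same way: either by passing to a fibrant replacement of $E$ over $|\Delta_{BG}^\op|$, or by invoking a local version of Theorem~B in the spirit of the references used there.
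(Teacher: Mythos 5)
Your approach is correct in outline but genuinely different from the paper's, and it is worth understanding what each buys. You build a single explicit evaluation map $f\colon E \to EG$ from the Bousfield--Kan model, then run a fiber comparison (Quillen's Theorem~B applied to the fact that each morphism of $\Delta_{BG}^\op$ induces an acyclic fibration of lift spaces, followed by the five lemma). The paper instead never constructs a direct map out of $E$: it pulls the diagram back to the twisted arrow category $\Tw(\Delta_{BG})^\op$, where the levelwise weak equivalence $\Delta^p \times \Map_{BG}(\Delta^q,EG) \to \Delta^p \times_{BG} EG$ separates the ``simplex'' variable from the ``mapping space into $EG$'' variable; homotopy terminality of the source and target functors $s,t$ then reduces the problem to showing $\underset{\Delta_{BG}}\hocolim\,\Delta^p \times_{BG} EG$ is contractible, and since geometric realization commutes with finite limits this becomes $\left(\underset{\Delta_{BG}}\hocolim \Delta^p\right) \times_{BG} EG$, which is disposed of by showing $\underset{\Delta_X}\hocolim\,\Delta^p \to X$ is an equivalence for all $X$ via excision and induction. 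The paper's route thus avoids quasifibrations entirely.

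Two small cautions about your route. First, the Theorem~B quasifibration statement lives over $|\Delta_{BG}^\op|$, not over $BG$; the last-vertex map $|\Delta_{BG}^\op| \to BG$ is an equivalence but not a fibration, so you should compare $E$ to the (homotopy) pullback of $EG$ along $|\Delta_{BG}^\op| \to BG$ rather than trying to make $E \to BG$ itself a quasifibration. You in fact flag this and propose the correct fix. Second, when you identify the fiber map as ``evaluation at a vertex,'' note that for it to land in the correct fiber of $EG \to BG$ you must evaluate at the \emph{image} in $BG$ of the last vertex of $\Delta^{p_0}$, and the resulting map $\Map_{BG}(\Delta^{p_0},EG) \to (EG)_{b}$ is an acyclic fibration (its fiber is the space of lifts agreeing at a point, which is contractible by the same lifting argument you used for the morphisms of the diagram). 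With these points tightened, your argument goes through and gives the stronger conclusion $E \simeq EG$; but the paper's twisted-arrow rearrangement is more robust, since it requires no explicit simplicial-identity check and no quasifibration machinery.
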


\begin{proof}
	We first re-arrange the colimit using the following string of weak equivalences.
	\begin{equation}\label{contractible_equivs}
	\xymatrix{
		\underset{\Tw(\Delta_{BG})^\op}\hocolim \Delta^p \times_{BG} EG \ar[r]^-\sim & \underset{\Delta_{BG}}\hocolim \Delta^p \times_{BG} EG \\
		\underset{\Tw(\Delta_{BG})^\op}\hocolim \Delta^p \times \Map_{BG}(\Delta^q,EG) \ar[u]_-\sim \ar[d]^-\sim & \\
		\underset{\Tw(\Delta_{BG})^\op}\hocolim \Map_{BG}(\Delta^q,EG) \ar[r]^\sim & \underset{\Delta_{BG}^\op}\hocolim \Map_{BG}(\Delta^q,EG)
	}
	\end{equation}
	Here $\Tw(\Delta_{BG})^\op$ denotes (the opposite of) the twisted arrow category of $\Delta_{BG}$. The objects are arrows in $\Delta_{BG}$, and a morphism from $\Delta^p \to \Delta^q \to BG$ to $\Delta^{p'} \to \Delta^{q'} \to BG$ is a factorization
	\[ \xymatrix @R=1em{
		\Delta^p \ar[d] \ar[r] &\Delta^{p'} \ar[d] \\
		\Delta^q \ar[d] &\Delta^{q'} \ar[l] \ar[d] \\
		BG \ar@{=}[r] & BG
	}. \]
	In general, for a category $\sC$, the twisted arrow category $\Tw(\sC)^\op$ is equipped with a ``source'' functor $s\colon \Tw(\sC)^\op \to C$ that remembers just the source of each arrow, and a ``target'' functor $t\colon \Tw(\sC)^\op \to \sC^\op$ that remembers  the target of the arrow.
	
	It is straightforward to define the diagrams on the left-hand side of \autoref{contractible_equivs}. The top horizontal map is the pullback of a diagram on $\Delta_{BG}$ along the source functor. Similarly, the horizontal diagram on the bottom is a pullback along the target functor.  The bottom vertical arrow arises by collapsing $\Delta^p$ to a point and is thus a levelwise equivalence. The top vertical arrow arises from the levelwise maps 
	\[ \xymatrix{ \Delta^p \times \Map_{BG}(\Delta^q,EG) \ar[r] & \Delta^p \times_{BG} EG } \]
	defined by sending $(x,f) \mapsto (x,f(g(x)))$, where $g$ is the given map $\Delta^p \to \Delta^q$. We check from the definition that this is indeed a map of $\Tw(\Delta_{BG})^\op$-diagrams. It is also an equivalence on each term, since restricting the $\Delta^p$ or $\Delta^q$ to a single point is an equivalence, and after this substitution we get a homeomorphism
	\[ \xymatrix{ \Map_{BG}(\{*\},EG) \ar[r]^-\cong & \{*\} \times_{BG} EG. } \]

	The next step is to show that these four maps of colimits are weak equivalences. For the vertical maps, this follows because the two maps of diagrams are an equivalence on each term. For the horizontal arrows, this follows because the source and target functors are homotopy terminal. For the source functor, this means that for any object $j \in C$, the overcategory $(j \downarrow s)$ is contractible. To prove this, we note that the overcategory consists of pairs of arrows $j \to a \to b$ and morphisms of the form
	\[ \xymatrix{
		j \ar[d] \ar@{=}[r] & j \ar[d] \\
		a \ar[d] \ar[r] & c \ar[d] \\
		b & d \ar[l]
	}. \]
	The inclusion of the subcategory of all arrows of the form $j = j \to b$ has a right adjoint, so that subcategory has an equivalent nerve. Furthermore, this subcategory has a terminal object $j = j = j$, so it is contractible. All together, this proves that $s$ is homotopy terminal. A similar proof works for the target functor $t$. 
	
	We have now reduced to proving that $\underset{\Delta_{BG}}\hocolim (\Delta^p \times_{BG} EG)$ is weakly contractible. Since geometric realization commutes with finite limits, we get a homeomorphism
	\[ \underset{\Delta_{BG}}\hocolim \left(\Delta^p \times_{BG} EG\right) \cong \left(\underset{\Delta_{BG}}\hocolim \Delta^p\right) \times_{BG} EG. \]
	Clearly $BG \times_{BG} EG \cong EG$ is contractible, so it is enough to prove that the map
	\[ \phi\colon \underset{\Delta_{BG}}\hocolim \Delta^p \to BG, \]
	which arises from all the individual maps $\Delta^p \to BG$, is an equivalence. There is an immediate equivalence
	\begin{equation}\label{easy_equivalence}
		\xymatrix{ \underset{\Delta_{BG}}\hocolim \Delta^p \ar[r]^-\sim & \underset{\Delta_{BG}}\hocolim {*} \ar[r]^-\cong & |\Delta_{BG}| \ar[r]^-\sim & BG }
	\end{equation}
	but that is a different map.
	To show that $\phi$ is an equivalence, we extend it to a natural transformation of functors on unbased spaces
	\[ \xymatrix{ \underset{\Delta_{X}}\hocolim \Delta^p \ar[r] & X. } \]
	It is clearly an equivalence when $X$ is empty or contractible. Furthermore, using \eqref{easy_equivalence}, both sides are equivalent to the identity functor and are therefore excisive. A standard inductive argument then shows that $\phi$ is an equivalence on all spaces. This finishes the proof.
\end{proof}

\section{Review of coarse and bivariant $A$-theory}

Let $G$ be a finite group and $X$ a $G$-space. Let $R(X)$ be the category of retractive spaces
\[ X\xrightarrow{i} Y\xrightarrow{r} X, \qquad ri=\id, \]
with weak equivalences given by the weak homotopy equivalences and cofibrations given by maps that have the fiberwise homotopy extension property (FHEP). The category $R(X)$ has a $G$-action through exact functors induced by conjugation from the $G$-action on $X$ \cite[\S 3.1.]{CaryMona}.   For taking $K$-theory, we restrict to the subcategory $R_{hf}(X)  \subseteq R(X)$ of retractive spaces that are \emph{homotopy finite}. These are the spaces that, in the homotopy category of retractive spaces, are a retract of a finite cell complex relative to $X$. We note the action respects this condition. 

For each subgroup $H \leq G$, the homotopy fixed points are defined as
$$R_{hf}(X)^{hH}:=\Cat(\tG, R_{hf}(X))^H ,$$ where $\tG$ is the $G$-category  with one object for each element of $G$ and a unique morphism between any two objects, and $\Cat(\tG, R_{hf}(X))$ is the category of all functors and natural transformations, with $G$ acting by conjugation \cite[Definition 2.2.]{CaryMona}. 

The homotopy fixed point category $R_{hf}(X)^{hH}$ is equivalent to the Waldhausen category whose objects are $H$-spaces $Y$ containing $X$ as an $H$-equivariant retract, whose underlying space is homotopy finite \cite[Proposition 3.1.]{CaryMona}. The morphisms are the $H$-equivariant maps of retractive spaces $Y \to Y'$. The cofibrations are the $H$-equivariant maps which are nonequivariantly cofibrations and the weak equivalences are the $H$-equivariant maps which are nonequivariantly weak equivalences.

We define $\bA_G^{\! \textup{coarse}}(X)$ to be the na\"ive $G$-spectrum obtained by applying $S_{\sbt}$ to the Waldhausen $G$-category $\Cat(\tG,R_{hf}(X))$. This is equivalent to the underlying na\"ive $G$-spectrum of a genuine $\Omega$-$G$-spectrum \cite[Theorem 2.21.]{CaryMona}.

For a Hurewicz fibration $p\colon E \to B$, the bivariant $A$-theory $A(p)$ is defined to be the $K$-theory of the Waldhausen category of retractive spaces $X$ over $E$, with the property that $X \to B$ is a fibration, and the map of fibers $E_b \to X_b$ is a retract up to homotopy of a relative finite complex. See \cite{bruce,raptis_steimle}.

In the present section we extend the following result of \cite{CaryMona} to the coassembly map.
\begin{prop}\label{coarse_equals_bivariant}
	There is a natural equivalence of symmetric spectra
	\[ \bA_G^{\textup{coarse}}(X)^H \simeq \bA(EG \times_H X \to BH). \]
\end{prop}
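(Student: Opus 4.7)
My plan is to construct an exact functor from the Waldhausen model of $\bA_G^{\textup{coarse}}(X)^H$ into the Waldhausen category defining $\bA(EG\times_H X \to BH)$, and then verify the hypotheses of Waldhausen's approximation theorem. Using the identification from \cite[Prop. 3.1]{CaryMona} that realizes $R_{hf}(X)^{hH}$ as the Waldhausen category of $H$-equivariant retractive spaces $X \to Y \to X$ whose underlying retractive space is homotopy finite relative to $X$, I would define
\[ \Phi(Y) \;=\; EG \times_H Y \;\longrightarrow\; EG \times_H X. \]
The verifications that $\Phi(Y)$ lies in the bivariant Waldhausen category come in three parts: it is retractive over $EG \times_H X$ because the Borel construction preserves the retraction; the composite $EG \times_H Y \to EG \times_H X \to BH$ is a Hurewicz fibration because $EG \to BH$ is a principal $H$-bundle; and the fiber over a basepoint of $BH$ is the original retractive pair $X \to Y \to X$, which by hypothesis is a retract up to homotopy of a finite relative CW complex.

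Next I would verify $\Phi$ is exact. The Borel construction is a left adjoint (to pullback along $EG \to BH$) on the underlying categories, so it preserves pushouts and hence cofibrations with the FHEP; and since $EG$ is a free, contractible $H$-CW-complex, $EG\times_H(-)$ preserves $H$-equivariant weak equivalences of suitably cofibrant objects. I would then apply Waldhausen's approximation theorem. For detection of weak equivalences, if $\Phi(f)$ is a weak equivalence, restriction to the fiber over the basepoint of $BH$ recovers $f$ as a nonequivariant weak equivalence, which is precisely what is tested by the weak equivalences of $R_{hf}(X)^{hH}$. For the approximation property, given a map $\Phi(Y) \to Z$ in the bivariant category, I pull $Z$ back along the universal cover $EG \to BH$ to obtain an $H$-equivariant retractive space $\widetilde Z$ over $EG \times X$; because $EG$ is contractible, $\widetilde Z$ is $H$-equivariantly equivalent to (a cofibrant replacement of) its fiber $Y'$ at a basepoint, and $Y' \in R_{hf}(X)^{hH}$. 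The composite $\Phi(Y) \to \Phi(Y') \to Z$ then provides the desired factorization, with the second map a weak equivalence of fibrations over $BH$ with equivalent fibers.

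Finally, to upgrade from an equivalence of spectra to an equivalence of symmetric spectra, I would arrange $\Phi$ to be simplicially enriched so that it induces a levelwise map of symmetric $S_\sbt$-constructions; naturality in $X$ is inherited from the functoriality of the Borel construction. The main obstacle I expect is the approximation property: specifically, arranging the cofibrant replacement $Y' \simeq \widetilde Z_*$ functorially and verifying that $\Phi(Y') \to Z$ is a genuine weak equivalence in the bivariant category (not merely a fiberwise homotopy equivalence) requires careful handling of the Hurewicz fibration condition and its interaction with CW replacement. An alternative that may sidestep these point-set subtleties is to produce an explicit homotopy inverse on suitably rigidified (Reedy-cofibrant or fibrant) models of both sides, comparing them via the unit/counit of the Borel--pullback adjunction and invoking the contractibility of $EG$ directly.
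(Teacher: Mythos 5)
The paper does not actually prove this proposition: it is cited as a result from \cite{CaryMona}, and the text of this paper only records the inducing functor $\Phi = EG \times_H -$, which you correctly identify. So there is no in-paper argument to compare against, and I will assess your proposal on its own merits.

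The outline (check exactness of $\Phi$, then apply the approximation theorem) is a plausible route, and your treatment of exactness and of the detection of weak equivalences (restricting along the fiber inclusion $X \hookrightarrow EG \times_H X$) is fine in spirit. However, your handling of the approximation property has a gap you do not appear to have located. After pulling $Z$ back along $EG \to BH$ to an $H$-equivariant retractive space $\widetilde Z$ over $EG \times X$, you take ``its fiber $Y'$ at a basepoint'' and assert $Y' \in R_{hf}(X)^{hH}$. This cannot be right as stated: the $H$-action on $EG$ is free, so no point $e_0 \in EG$ is $H$-fixed, and consequently the fiber $\widetilde Z_{e_0}$ over $\{e_0\} \times X$ carries no $H$-action at all. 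It is a nonequivariant retractive space over $X$, not an object of $R_{hf}(X)^{hH}$, and it does not receive an $H$-equivariant map from $Y$. What you actually need is an honest object $Y'$ of $R_{hf}(X)^{hH}$ together with an $H$-equivariant equivalence $EG \times Y' \simeq \widetilde Z$ of retractive spaces over $EG \times X$, compatible with the given map from $EG \times Y$. Producing such a $Y'$ amounts to a separate lemma --- for instance, that pullback along the $H$-map (and nonequivariant equivalence) $EG \times X \to X$ induces an equivalence, via the approximation theorem, from $R_{hf}(X)^{hH}$ to the Waldhausen category of $H$-equivariant retractive spaces over $EG \times X$ that fiber over $EG$, followed by the observation that quotienting by the free $H$-action identifies the latter with $R_{hf}(EG \times_H X \to BH)$. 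That second, descent-type step is where the content is, and it is not the same thing as cofibrantly replacing a nonequivariant fiber. A smaller point: the concluding remark about needing simplicial enrichment to obtain an equivalence of symmetric (rather than plain) spectra is unnecessary; any exact functor of Waldhausen categories already induces a map of symmetric spectra through the iterated $S_{\sbt}$-construction.
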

The equivalence is induced by the functor
\[ \Phi\colon R_{hf}(X)^{hH} \ra R_{hf}(EG \times_H X \overset{p}\to BH) \]
that applies $EG \times_H -$ to the retractive space $(Y,i_Y,p_Y)$ over $X$, obtaining a retractive space over $EG \times_H X$:
\[ \xymatrix @C=5em{ EG \times_H X \ar[r]^-{EG \times_H i_Y} & EG \times_H Y \ar[r]^-{EG \times_H p_Y} & EG \times_H X. } \]

To define the coassembly map, we observe that while bivariant $A$-theory is a functor of fibrations, it can be regarded as a contravariant functor on $\mc U_B$ in the following way. Fix a fibration $p\colon E \to B$. Then $\mc U_B$ is equivalent to the category whose objects are pullback squares
\[ \xymatrix{ E' \ar[r] \ar[d]_-{p'} & E \ar[d]^-p \\ B' \ar[r] & B } \]
and whose maps are commuting squares (necessarily pullback squares)
\[ \xymatrix{ E'' \ar[r] \ar[d]_-{p''} & E' \ar[d]^-{p'} \\ B'' \ar[r] & B' .} \]
Along this equivalence, bivariant $A$-theory is a reduced homotopy functor from $\mc U_B^\op$ to spectra, so it has a coassembly map
\[ c\alpha \colon \bA(E' \overset{p'}\to B') \to \bA_{\%}(E' \overset{p'}\to B'). \]
We emphasize that the coassembly map depends on the choice of fibration $E\overset{p}\to B$ and map $B'\to B$. Different choices give rise to different coassembly maps.

Fix the fibration $EG \times_H X \to BH$ and the pullback square 
\[ \xymatrix{ EG \times_H X \ar[r]^-= \ar[d]_-{p} & EG \times_H X \ar[d]^-p \\ BH \ar[r]^-= & BH, } \]
and consider the resulting coassembly map. Our last remaining goal is to prove the following.

\begin{thm}\label{thm:homotopy_fixed_equals_coassembly}
	In the stable homotopy category, the map from fixed points to homotopy fixed points is isomorphic to the coassembly map for bivariant $A$-theory:
	\[ \xymatrix{
		\bA_G^{\textup{coarse}}(X)^H \ar[d]^-\sim \ar[r] & \bA_G^{\textup{coarse}}(X)^{hH} \ar[d]^-\sim \\
		\bA(EG \times_H X \to BH) \ar[r]^-{c\alpha} & \bA_{\%}(EG \times_H X \to BH). } \]
	Furthermore the left-hand map in the above diagram can be taken to be the equivalence of \autoref{coarse_equals_bivariant}.
\end{thm}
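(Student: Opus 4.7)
The plan is to apply \autoref{coassemblymap} to a bivariant $A$-theory functor on $\mc U_{BH}$ and then match the resulting $\cG$-diagram (with $\cG$ built from $H$) with the coarse equivariant $A$-theory $\cG$-diagram, refining the equivalence $\Phi$ of \autoref{coarse_equals_bivariant} on the $[G]$-vertex.

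First, consider the reduced contravariant homotopy functor $F \colon \mc U_{BH}^\op \to \mc Sp$ defined by
\[ F(B' \to BH) := \bA\bigl( (EG \times_H X) \times_{BH} B' \to B' \bigr). \]
Applying \autoref{coassemblymap} identifies the coassembly map $F(BH) \to F_\%(BH)$, in the homotopy category, with the homotopy limit map $F(BH) \to F(EH)^{hH}$. Choosing $EG$ as a model for $EH$ and computing the pullback, $F(EH) = \bA(EG \times X \to EG)$ — the trivial $X$-bundle over $EG$ — as a naive $H$-spectrum via the free $H$-action on $EG$ together with the $H$-action on $X$. Meanwhile, $F(BH) = \bA(EG \times_H X \to BH)$ on the nose, i.e., it is the starting object of the coassembly map in the bottom row of the diagram.

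Next, construct a zig-zag of $\cG$-diagram equivalences linking $(F(BH) \to F(EH))$ to $(\bA_G^{\textup{coarse}}(X)^H \to \bA_G^{\textup{coarse}}(X))$ which on the $[G]$-vertex recovers $\Phi$. The key input is the exact functor
\[ \tilde{\Phi} \colon R_{hf}(X) \to R_{hf}(EG \times X \to EG), \qquad Y \mapsto EG \times Y, \]
which is $H$-equivariant for the conjugation action on the source and the action induced by the diagonal $H$-action on $EG \times X$ on the target. This is a Waldhausen equivalence, with a natural-equivalence inverse given by restriction along the inclusion of any point into $EG$. Applying $\Cat(\tG,-)$ and $K$-theory gives an $H$-equivariant equivalence $\bA_G^{\textup{coarse}}(X) \simeq K(\Cat(\tG, R_{hf}(EG \times X \to EG)))$, and then an $H$-equivariant equivalence to $F(EH)$ obtained by collapsing the $\tG$-variable — e.g.\ via a homotopy colimit — where freeness of the $H$-action on $EG$ is used to repair the non-equivariance of naive evaluation at a vertex of $\tG$. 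On $H$-fixed points this chain specializes to $\Phi$ by construction, since both sides unwind to the Waldhausen category of $H$-equivariant retractive spaces over $X$ and over $EG \times_H X$ respectively. Running the resulting equivalence of $\cG$-diagrams through the adjunction of \autoref{adjunction1} and combining with \autoref{coassemblymap} then yields the theorem, with the left vertical arrow of the diagram equal to $\Phi$.

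The main obstacle is this coherence step: while $\tilde{\Phi}$ is manifestly equivariant, promoting it to an equivalence of $\cG$-diagrams requires an $H$-equivariant identification between $\Cat(\tG, R_{hf}(EG \times X \to EG))$ (the natural avatar coming from coarse $A$-theory) and $R_{hf}(EG \times X \to EG)$ itself. Such an identification exists because the diagonal $H$-action on $EG \times X$ is free, so that strict and homotopy fixed points agree; but making it equivariantly explicit and matching $\Phi$ on the $[G]$-vertex is the principal new bookkeeping beyond what is already in the proof of \autoref{coarse_equals_bivariant}.
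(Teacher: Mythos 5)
Your outline is correct in spirit---reduce via \autoref{coassemblymap} to comparing $\cG$-diagrams, then match the coarse and bivariant $\cG$-diagrams through a functor of the form $Y \mapsto EG \times Y$---but the crucial step has a genuine error, and the remedy is precisely what the paper has to work for.

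The claim that $\widetilde\Phi \colon R_{hf}(X) \to R_{hf}(EG\times X \to EG)$, $Y\mapsto EG\times Y$, is ``manifestly $H$-equivariant'' is false. The conjugation action on the source sends $(Y,i,p)$ to $(Y, i\circ g^{-1}, g\circ p)$, while the action on the target is by pullback along $\rho_g\colon EG\times X\to EG\times X$, and $\Phi(g\cdot Y)$ and $g\cdot \Phi(Y)$ are not equal: they are only canonically isomorphic. This is exactly why the paper introduces the isomorphisms $\theta_g\colon \Phi\circ g\to g\circ \Phi$ and checks the cocycle condition $g\theta_h\circ\theta_g = \theta_{gh}$, so that $\Phi$ is merely a \emph{pseudo}equivariant functor in the sense of \cite[Def 2.5]{CaryMona}. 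Only after applying $\Cat(\tG,-)$ does it become strictly equivariant \cite[Prop 2.10]{CaryMona}. You've misidentified where the coherence problem lives: it is not mainly in the comparison between $\Cat(\tG, R_{hf}(EG\times X\to EG))$ and $R_{hf}(EG\times X\to EG)$, but already in the failure of strict equivariance of $Y\mapsto EG\times Y$ itself.

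There is a second, related gap: even once you have a pseudoequivariant $\widetilde\Phi$ and a strictly equivariant replacement, you still need the resulting square of functors---fixed points on the left, bivariant $A$-theory on the right, with the vertical inclusions---to commute \emph{on the nose} to give a map of $\cG$-diagrams, and it doesn't; it commutes only up to a $G$-fixed natural isomorphism $\eta$. The paper resolves this with a general device (the category $\mc D_I$ interpolating between $\mc C$ and $\mc D$ along $I$), which converts a square that commutes up to $G$-fixed natural isomorphism into a strictly commuting zig-zag of $\cG$-diagrams of Waldhausen categories. Your proposal names ``the principal new bookkeeping'' but stops short of providing it; in the paper this bookkeeping, the verification of the cocycle condition for $\theta_g$, the construction of $\eta$, and the check that $\eta$ is $G$-fixed together constitute the bulk of the argument. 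Finally, your suggested inverse to $\widetilde\Phi$ (restriction to a point of $EG$) is a non-equivariant equivalence only; the paper gets around this by showing the remaining functor in the zig-zag is a $K$-theory equivalence via a two-out-of-three argument using $\Cat(\tG,-)\simeq(-)$ nonequivariantly, rather than by exhibiting an equivariant inverse.
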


\section{Proof of  \autoref{intro_second}}

Note that without loss of generality we may take $H = G$. Since $G$ is finite, we may ignore issues of enrichment. By \autoref{coassemblymap}, the coassembly map for bivariant $A$-theory is equivalent to the homotopy limit map for the diagram on $C(\mathcal{B} G)$ given by bivariant $A$-theory on $EG$ and $BG$. So it remains to compare the resulting diagram on $C(\mathcal{B} G)$ to the one defined by coarse $A$-theory.

%
%
\begin{prop}
The equivalence of \autoref{coarse_equals_bivariant} can be extended to an equivalence of diagrams of symmetric spectra over $C(\mathcal{B} G)$.
\end{prop}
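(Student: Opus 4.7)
The plan is to unpack what a map of enriched diagrams on $\cG$ amounts to and build it up piece by piece. Such a diagram is the data of a spectrum with $G$-action (value at $[e]$), a spectrum (value at $[G]$), and a $G$-fixed map from the latter to the former; a map of diagrams is therefore a $G$-equivariant map at $[e]$, a map at $[G]$, and a commuting square for the morphism $[G] \to [e]$. Proposition~\autoref{coarse_equals_bivariant} applied to $H = G$ and $H = \{e\}$ already gives pointwise equivalences, where in the second case one further uses that $EG$ is contractible to identify $\bA(EG \times X \to *)$ with $\bA(EG \times X \to EG)$, the value of the bivariant functor on $EG \to BG$. What remains is to upgrade the equivalence at $[e]$ to be $G$-equivariant and to verify commutativity of the square.

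\textbf{Equivariance at $[e]$.} On the coarse side, $G$ acts on $\bA_G^{\textup{coarse}}(X) = K(\Cat(\tG, R_{hf}(X)))$ by conjugation on the functor category; on the bivariant side, $G$ acts on $\bA(EG \times X \to EG)$ by deck transformations of $EG$ over $BG$. I would lift the functor $\Phi$ at $H = \{e\}$ to a $G$-equivariant functor $\tilde\Phi$ by routing through an intermediate Waldhausen $G$-category of $G$-equivariant retractive spaces over $EG \times X \to EG$, whose $G$-fixed subcategory recovers $R_{hf}(EG \times_G X \to BG)$ by descent along the $G$-cover $EG \to BG$. Concretely, to a functor $F\colon \tG \to R_{hf}(X)$ one associates the $G$-equivariant retractive space built from $EG \times F(1)$, with $G$-action assembled from the deck action on $EG$ together with the canonical isomorphisms $F(1) \cong F(g)$ furnished by the $G$-action on $\tG$ and the functoriality of $F$.

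\textbf{Commutativity of the square.} The morphism $[G] \to [e]$ is realized on the coarse side as the inclusion of fixed points $\Cat(\tG, R_{hf}(X))^G \hookrightarrow \Cat(\tG, R_{hf}(X))$, and on the bivariant side as the pullback along $EG \to BG$. For a $G$-fixed $F$ corresponding to a $G$-equivariant retractive space $Y$, both composites in the square produce $EG \times Y$ with diagonal $G$-action: $\Phi(Y) = EG \times_G Y$ pulled back along $EG \to BG$ is $EG \times Y$, and $\tilde\Phi(F)$ viewed non-equivariantly is also $EG \times Y$. This gives commutativity up to a canonical natural isomorphism of Waldhausen categories, hence of their $K$-theory spectra.

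\textbf{Main obstacle.} The crux is the equivariance step. The conjugation action on $\Cat(\tG, R_{hf}(X))$ and the deck-transformation action on $R_{hf}(EG \times X \to EG)$ are structurally quite different, so producing a single strictly equivariant point-set model of $\tilde\Phi$ that specializes to $\Phi$ on fixed points is delicate. If strict equivariance proves elusive, one can instead exhibit a zig-zag of equivariant natural equivalences through the intermediate $G$-equivariant Waldhausen category above, which suffices to produce an isomorphism in the homotopy category of $\cG$-diagrams. Once $\tilde\Phi$ is in place, the fact that it is a $K$-theoretic equivalence follows formally from Proposition~\autoref{coarse_equals_bivariant} at both $H = \{e\}$ and $H = G$.
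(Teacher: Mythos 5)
Your plan correctly identifies the structure of a map of $\cG$-diagrams and the central obstacle (making $\Phi$ strictly $G$-equivariant at the $[e]$-spot), and your backup suggestion of a zig-zag is in fact the only thing that works and is what the paper does. However, the proposal defers precisely the hard parts, and the concrete construction you sketch has a genuine difficulty.

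Sending $F$ to $EG \times F(1)$ with a ``$G$-action assembled from the deck action on $EG$ together with the canonical isomorphisms $F(1) \cong F(g)$'' does not define a strictly $G$-equivariant functor $\Cat(\tG,R_{hf}(X)) \to R_{hf}(EG \times X \to EG)$: the conjugation action sends $F$ to $gF(g^{-1}-)$, so $\tilde\Phi(g\cdot F)$ and $g\cdot\tilde\Phi(F) = \rho_g^*(EG\times F(1))$ agree only up to a coherence isomorphism $\theta_g$, and one must verify the cocycle condition $g\theta_h\circ\theta_g = \theta_{gh}$ before this data can be rectified. The paper does this: $\Phi\colon R_{hf}(X) \to R_{hf}(EG\times X\to EG)$ is shown to be a \emph{pseudo}equivariant functor with specified $\theta_g$'s, and is strictified by applying $\Cat(\tG,-)$ (Proposition 2.10 of \cite{CaryMona}), so the actual target at $[e]$ is $\Cat(\tG,R_{hf}(EG\times X\to EG))$ rather than $R_{hf}(EG\times X\to EG)$ itself. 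Your ``intermediate Waldhausen $G$-category of $G$-equivariant retractive spaces'' is a different intermediary and would need descent along $EG\to BG$ to be verified; neither route is carried out in the proposal.

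The second gap is the passage from ``commutativity up to a canonical natural isomorphism of Waldhausen categories, hence of their $K$-theory spectra.'' A natural isomorphism of exact functors does not produce a strictly commuting $\cG$-diagram of spectra, which is what is actually needed. The paper handles this with a general reduction: given a square of equivariant exact functors commuting up to a \emph{$G$-fixed} natural isomorphism, one replaces one corner by a cograph-type category $\mc D_I$ to obtain a genuinely commuting zig-zag of $\cG$-diagrams. This requires both the construction of the natural isomorphism $\eta$ (given explicitly via maps of spans in \eqref{coassembly_proof_big_composite}) and a verification that $\eta$ is $G$-fixed, i.e. $\rho_\gamma^*\eta_{\gamma^{-1}g} = \eta_g$. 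None of this appears in your proposal. In short: the architecture is right, but the cocycle check, the rectification step, the $G$-fixedness of $\eta$, and the zig-zag replacement are exactly where the proof lives, and they are all left open.
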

We expect it is possible to compare these two as diagrams over $\mc O(G)^\op$, but this raises additional coherence issues, and is not necessary to prove \autoref{thm:homotopy_fixed_equals_coassembly}. 
\begin{proof}
We start by describing the $\mc O(G)^\op$-action on bivariant $A$-theory. To each map of $G$-sets $f\colon G/H \to G/K$ and $G$-space $X$ we assign the following pullback square.
\[ \xymatrix{
B(*,G,G \times_H X) \ar[r] \ar[d] & B(*,G,G \times_K X) \ar[d] \\
B(*,G,G/H) \ar[r]^-{EG \times_G f} & B(*,G,G/K) } \]
The vertical maps collapse $X$ to a point, and the top horizontal map $$G \times_H X \to G \times_KX$$ sends $(\gamma,x)$ to $(\gamma g^{-1},gx)$, where $g$ is any element such that $f(eH)=g^{-1}K$. Note that this formula is well-defined because $g$ is unique up to left multiplication by $K$. It is easy to check that these formulas give a functor from $\mc O(G)$ into the category of pullbacks of the fibration $EG \times_G X \to BG$, and therefore define the action of $\mc O(G)^\op$ on the bivariant $A$-theory spectra $\bA(EG \times_H X \to EG/H)$.
This action is strict by functoriality of bivariant $A$-theory (see \cite[Rmk 3.5]{raptis_steimle}).

Now we restrict to $C(\mathcal{B} G)$, where we wish to prove that the functor $\Phi$ of \autoref{coarse_equals_bivariant} gives a map of $C(\mathcal{B} G)$ diagrams, in other words that the two squares below commute:
\[ \xymatrix{
\bA_G^{\textup{coarse}}(X)^G \ar[r]^-\Phi_-\sim \ar[d]_-{\textup{include}} & \bA(EG \times_G X \to EG/G) \ar[d]^-{\textup{include}} \\
\bA_G^{\textup{coarse}}(X)^{\{e\}} \ar[r]^-\Phi_-\sim \ar[d]_-{g\cdot} & \bA(EG \times X \to EG) \ar[d]^-{g\cdot} \\
\bA_G^{\textup{coarse}}(X)^{\{e\}} \ar[r]^-\Phi_-\sim & \bA(EG \times X \to EG).
} \]
This turns out to be false, but only because the relevant functors of Waldhausen categories agree up to canonical isomorphism, rather than strictly. We therefore replace our two diagrams over $C(\mathcal{B} G)$ by equivalent ones on which the map $\Phi$ strictly commutes with the $C(\mathcal{B} G)$ action.

First we make the following reduction. We first show that in order to get a strictly commuting zig-zag of equivalences of  $C(\mathcal{B} G)$-diagrams, it is enough to define a square of $G$-equivariant functors
\[ \xymatrix{
	\mc C \ar[r]^-{F_1} \ar[d]_-I & \mc C' \ar[d]^-{I'} \\
	\mc D \ar[r]_-{F_2} & \mc D'
} \]
such that $\mc C$ and $\mc C'$ have trivial $G$-action, and such that the square commutes up to a $G$-fixed natural isomorphism $\eta$. Given such a square, we may replace $\mc D$ by the category $\mc D_I$ defined as follows:
\begin{itemize}
\item the objects $\mc D_I$ are $\ob \mc C \amalg \ob \mc D$, and 
\item the morphisms are given by $\mc D_I(d,d')=\mc D(d,d')$, $\mc D_I(d,c)=\mc D(d,Ic)$, $\mc D_I(c,d)=\mc D(Ic,d)$ if $c$ is an object of $\mc C$ and $d,d'$ are objects of $\mc D$.
\end{itemize}
We define a new functor $\mc D_I \to \mc D'$ using $F_2$ on the full subcategory on $\ob \mc D$, $I' \circ F_1$ on the full subcategory on $\ob \mc C$, and on each morphism $f$ between $c \in \ob \mc C$ and $d \in \ob \mc D$, the composite
\[ \xymatrix @C=3em{
	I' \circ F_1(c) \ar@{<->}[r]^-\cong_-\eta & F_2 \circ I(c) \ar@{<->}[r]_-{F_2(f)} & F_2(d).
} \]
It is easy to check this is indeed a functor and is $G$-equivariant. It is then straightforward to define the rest of the following diagram so that every functor is equivariant and every square of functors commutes strictly, giving a zig-zag of $C(\mathcal{B} G)$-diagrams of categories
\[ \xymatrix{
	\mc C \ar@{=}[r] \ar[d]_-I & \mc C \ar[r]^-{F_1} \ar[d] & \mc C' \ar[d]^-{I'} \\
	\mc D & \ar[l]^-\sim \mc D_I \ar[r] & \mc D' .
} \]
Note that if $\mc C$ and $\mc D$ are Waldhausen categories and all functors $I,I', F_1, F_2$ are exact, then the resulting diagram above is also a diagram of Waldhausen categories, where $\mc D_I$ has the Waldhausen structure inherited from computing maps in $\mc D$.  With this reduction in hand, it is enough to make a square of functors of Waldhausen $G$-categories, in which the top row has trivial $G$-action, that commutes up to a $G$-fixed natural isomorphism. We will construct the following square:
\[ \xymatrix @C=4em{
	\Cat(\tG,R_{hf}(X))^G \ar[r]^-\Phi \ar[dd]_-{I} & R_{hf}(EG \times_G X \to BG) \ar[d]^-{q^*} \\
	& R_{hf}(EG \times X \to EG) \ar[d]^-{const} \\
	\Cat(\tG,R_{hf}(X)) \ar[r]^-{\widetilde{\Phi}} & \Cat(\tG,R_{hf}(EG \times X \to EG)).
} \]
The map $\Phi$ along the top is the one from \autoref{coarse_equals_bivariant} that applies $EG \times_H -$ to the retractive space $(Y,i_Y,p_Y)$ over $X$, obtaining a retractive space over $EG \times_H X$.

The left-hand vertical map $I$ includes the fixed points into the whole category, i.e. it takes a retractive $G$-space $(Y,i,p)$ to the $G$-tuple of retractive spaces $(Y,i\circ g^{-1},g \circ p)$ with isomorphisms of retractive spaces
\[ \xymatrix @C=4em{ \phi_{g,h}\colon (Y,i\circ g^{-1},g \circ p) \ar[r]^-{h^{-1}g \cdot -} & (Y,i\circ h^{-1},h \circ p) } \]
over the identity map of $X$. Along the right-hand edge, the first functor pulls back along the quotient map
\[ q\colon EG \times X \to EG \times_G X \]
The left action of $g \in G$ on the target is by pullback along the map
\[ \xymatrix @C=4em{ \rho_g\colon EG \times X \ar[r]^-{- \cdot g \times g^{-1} \cdot -} & EG \times X } \]
and note that $q^*$ lands in the $G$-fixed points because the composite function $q \circ \rho_g$ is equal to $q$. The second functor on the right-hand edge pulls back along the map of categories $\tG \to *$. To define the functor on the bottom, first form the functor
\[ \Phi\colon R_{hf}(X) \to R_{hf}(EG \times X \to EG), \]
\[ \Phi(Z,i,p) = EG \times (Z,i,p) = (EG \times Z,\id \times i, \id \times p). \]
Then pick the isomorphisms
\[ \theta_g\colon \Phi \circ g \to g \circ \Phi \]
\[ EG \times (Z,i \circ g^{-1},g \circ p) \to \rho_g^*(EG \times (Z,i,p)) \]
arising from the commuting diagram
\[ \xymatrix @C=3em{
	EG \times X \ar[r]^-{\cdot g,g^{-1}\cdot}_-{\rho_g} \ar[d]_-{\id,i \circ g^{-1}} & EG \times X \ar[d]_-{\id,i} \\
	EG \times Z \ar[r]^-{\cdot g,\id} \ar[d]_-{\id,g \circ p} & EG \times Z \ar[d]_-{\id,p} \\
	EG \times X \ar[r]^-{\cdot g,g^{-1}\cdot}_-{\rho_g} & EG \times X. \\
} \]
We check the cocycle condition $g\theta_h \circ \theta_g = \theta_{gh}$, which reduces to the equality $(- \cdot g) \cdot h = - \cdot (gh)$ as self-maps of $EG \times Z$, and $\rho_h \circ \rho_g = \rho_{gh}$ as self-maps of $EG \times X$. Therefore by \cite[Def 2.5]{CaryMona}, the isomorphisms $\theta_g$ make $\Phi$ a pseudoequivariant functor. By \cite[Proposition 2.10.]{CaryMona}, after applying $\Cat(\tG,-)$ we get a strictly equivariant functor $\widetilde\Phi$.

The top route through our diagram of functors takes a retractive $G$-space $Y$ over $X$ to the functor $\tG \to R_{hf}(EG \times X \to EG)$ with values
\[ g \mapsto q^*(EG \times_G (Y,i,p)), \quad (g \to h) \mapsto \id \]
The bottom route produces the functor with values
\[ g \mapsto \rho_g^*(EG \times (Y,i,p)). \]
To describe the maps, let us represent the space $\rho_g^*(EG \times (Y,i,p))$ by drawing the span along which we take the pullback to get it:
\[ \xymatrix{ EG \times Y \ar[r]^-{\id,p} & EG \times X & \ar[l]_-{\cdot g, g^{-1} \cdot}^-{\rho_g} EG \times X & \rho_g^*(EG \times (Y,i,p)) } \]
Then our functor out of $\tG$ assigns the map $g \to h$ to the composite of the following isomorphisms.
\begin{equation}\tag{$\ast$}\label{coassembly_proof_big_composite}
\xymatrix @C=4em{
	EG \times Y \ar[r]^-{\id,p} \ar[d]_-{\cdot g^{-1},\id} & EG \times X \ar[d]^-{\cdot g^{-1},g\cdot } & \ar[l]_-{\cdot g, g^{-1} \cdot}^-{\rho_g} EG \times X \ar@{=}[d] & \rho_g^*(EG \times (Y,i,p)) \ar[d]^-{\theta_g^{-1}} \\
	EG \times Y \ar[r]^-{\id,g \circ p} \ar[d]_-{\id, h^{-1}g \cdot} & EG \times X \ar@{=}[d] & \ar@{=}[l] EG \times X \ar@{=}[d] & EG \times (Y,i \circ g^{-1},g \circ p) \ar[d]^-{\id, (h^{-1}g \cdot)} \\
	EG \times Y \ar[r]^-{\id,h \circ p} \ar[d]_-{\cdot h,\id} & EG \times X \ar[d]^-{\cdot h, h^{-1}\cdot } & \ar@{=}[l] EG \times X \ar@{=}[d] & EG \times (Y,i \circ h^{-1},h \circ p) \ar[d]^-{\theta_h} \\
	EG \times Y \ar[r]^-{\id,p} & EG \times X & \ar[l]_-{\cdot h, h^{-1} \cdot}^-{\rho_h} EG \times X & \rho_h^*(EG \times (Y,i,p)) \\
}
\end{equation} 
Now we will define a natural isomorphism $\eta$ from the bottom route to the top route. Continuing to use this span notation, for each $g \in \tG$ we define an isomorphism $\eta_g$ by the map of spans
\[ \xymatrix{
	EG \times Y \ar[r]^-{\id,p} \ar[d]_-{\id,\id} & EG \times X \ar[d]^-q & EG \times X \ar[l]_-{\rho_g} \ar@{=}[d] & \rho_g^*(EG \times (Y,i,p)) \ar[d]^-{\eta_g} \\
	EG \times_G Y \ar[r]^-{\id,p} & EG \times_G X & EG \times X \ar[l]_-q & q^*(EG \times_G (Y,i,p))
} \]
This commutes with the maps $g \to h$ of $\tG$ because the composite of the three maps of spans from \eqref{coassembly_proof_big_composite} commutes with the map of spans just above. Naturality follows because each $G$-equivariant map $Y \to Y'$ induces maps on the source and target of $\eta_g$ that commute with $\eta_g$ for each $g$. Finally we check that $\eta$ is a $G$-fixed natural transformation. The map $\gamma\eta_{\gamma^{-1}g} := \rho_\gamma^*\eta_{\gamma^{-1}g}$ comes from the map of spans
\[ \xymatrix{
	EG \times Y \ar[r]^-{\id,p} \ar[d]_-{\id,\id} & EG \times X \ar[d]^-q & EG \times X \ar[l]^-{\rho_{\gamma^{-1}g}} \ar@{=}[d] & EG \times X \ar[l]^-{\rho_\gamma} \ar@/_1em/[ll]_-{\rho_g} \ar@{=}[d] & \rho_\gamma^*\rho_{\gamma^{-1}g}^*(EG \times (Y,i,p)) \ar[d]^-{\rho_\gamma^* \eta_{\gamma^{-1}g}} \\
	EG \times_G Y \ar[r]^-{\id,p} & EG \times_G X & EG \times X \ar[l]_-q & EG \times X \ar[l]_-{\rho_\gamma}  \ar@/^1em/[ll]^-{q} & \rho_\gamma^*q^*(EG \times_G (Y,i,p)),
} \]
which is indeed the same map of spans that defines $\eta_g$. This finishes the construction of the square of equivariant functors that commutes up to equivariant isomorphism. In summary, using the reduction cited earlier in the proof, we have now constructed a strictly commuting zig-zag of $C(\mathcal{B} G)$-diagrams of Waldhausen categories
\[ \small \xymatrix @C=1em{
	\Cat(\tG,R_{hf}(X))^G \ar[d] \ar@{=}[r] & \Cat(\tG,R_{hf}(X))^G \ar[r]^-\Phi \ar[d] & R_{hf}(EG \times_G X \to BG)  \ar@{=}[r] \ar[d]^-{\mathrm{const} \circ q^*} & R_{hf}(EG \times_G X \to BG) \ar[d]^-{q^*} \\
	\Cat(\tG,R_{hf}(X)) & \ar[l]_-\sim \Cat(\tG,R_{hf}(X))_I \ar[r] & \Cat(\tG,R_{hf}(EG \times X \to EG)) & R_{hf}(EG \times X \to EG). \ar[l]_-{\mathrm{const}}^-\sim
} \]
Now we apply the $K$-theory functor to this diagram. By \autoref{coarse_equals_bivariant}, the top map $\Phi$ induces an equivalence in $K$-theory. The bottom maps labeled $\sim$ are $G$-maps which are nonequivariant equivalences. It remains to show that the remaining horizontal map gives an equivalence on $K$-theory. In general, for any pseudo equivariant functor $\Phi\colon \mc C\to \mc D$, we have a commutative diagram of nonequivariant categories
	\[\xymatrix{ \Cat(\tG, \mc C) \ar[d]_-\sim \ar[r]^-{\widetilde{\Phi}} & \Cat(\tG, \mc D)\ar[d]^-\sim \\ 
		\mc C \ar[r]_-\Phi & \mc D }\]
where the vertical maps are nonequivariant equivalences. (Note that the diagram with those equivalences reversed does not commute.) Since $\Phi$ induces an equivalence on $K$-theory, so does $\widetilde{\Phi}$. Now use the factorization
\[ \xymatrix{ 	\Cat(\tG,R_{hf}(X))\ar@/_2em/[rr]_-{\widetilde{\Phi}} \ar[r]^-\sim & \Cat(\tG,R_{hf}(X))_I \ar[r] & \Cat(\tG,R_{hf}(EG \times X \to EG))  }\]
to conclude that the remaining functor $$\Cat(\tG,R_{hf}(X))_I \to \Cat(\tG,R_{hf}(EG \times X \to EG))$$ also gives an equivalence in $K$-theory. Thus we get a strictly commuting zig zag of equivalences of $C(\mathcal{B} G)$ diagrams in spectra.

\end{proof}

  \bibliographystyle{amsalpha}
  \bibliography{references}

\begingroup%
\setlength{\parskip}{\storeparskip}

\end{document}